\newcommand{\bi}{{\bf i}}
\newcommand{\bj}{{\bf j}}
\newcommand{\bk}{{\bf k}}
\newcommand{\Z}{{\mathbb Z}}
\newcommand{\N}{{\mathbb N}}
\newcommand{\qab}{Q_{a,b}}
\newcommand{\gab}{g_{a,b}}
\newcommand{\qtab}{Q^2_{a,b}}
\newcommand{\al}{\alpha}
\newtheorem{theorem}{Theorem}[section]
\newtheorem{definition}[theorem]{Definition}
\newtheorem{conjecture}[theorem]{Conjecture}
\newtheorem{lemma}[theorem]{Lemma}
\begin{document}

\title{Sums of squares in Quaternion rings}
\author[Cooke]{Anna Cooke}
\author[Hamblen]{Spencer Hamblen}
\author[Whitfield]{Sam Whitfield}
\thanks{Research supported by the McDaniel College Student-Faculty Summer Research Fund}

\begin{abstract}
Lagrange's Four Squares Theorem states that any positive integer can be expressed as the sum of four integer squares.  We investigate the analogous question over Quaternion rings, focusing on squares of elements of Quaternion rings with integer coefficients.  We determine the minimum necessary number of squares for infinitely many Quaternion rings, and give global upper and lower bounds.
\end{abstract}

\maketitle

\section{Introduction and Definitions}

\subsection*{Waring's Problem}

\begin{theorem}[Waring's Problem/Hilbert-Waring Theorem]
For every integer $k \geq 2$ there exists a positive integer $g(k)$ such that every positive integer is the sum of at most $g(k)$ $k$-th powers of integers.
\end{theorem}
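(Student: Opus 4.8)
The statement for $k=2$ is exactly Lagrange's four-square theorem, so I will sketch the general case, following the Hardy--Littlewood circle method. The plan is to fix $k$, to choose a number of summands $s = s(k)$ that is large in terms of $k$, and to prove that the number $r(n)$ of representations of $n$ as a sum of $s$ nonnegative $k$-th powers is strictly positive for all sufficiently large $n$; the finitely many remaining $n$ are then absorbed into the constant $g(k)$ by writing each as a sum of $1^k$'s. The starting point is the exponential-sum encoding: writing $e(\theta) = e^{2\pi i \theta}$ and $P = \lfloor n^{1/k}\rfloor$, set $f(\alpha) = \sum_{x=0}^{P} e(\alpha x^k)$, so that orthogonality gives
\[
  r(n) = \int_0^1 f(\alpha)^s\, e(-n\alpha)\, d\alpha .
\]

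The core of the method is to dissect the circle $[0,1]$ into \emph{major arcs} --- short intervals around rationals $a/q$ with $q$ small --- and the complementary \emph{minor arcs}. On the major arcs I would approximate $f(\alpha)$ by a Gauss-sum factor times a smooth integral, and sum the resulting contributions to produce the expected main term, of size a constant times $\mathfrak{S}(n)\, n^{s/k - 1}$, where $\mathfrak{S}(n)$ is the singular series. A separate step shows $\mathfrak{S}(n)$ is bounded away from $0$ uniformly in $n$: by multiplicativity this reduces to checking that the congruence $x_1^k + \cdots + x_s^k \equiv n \pmod{p^m}$ is solvable for every prime power, which holds once $s$ is large enough. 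On the minor arcs the goal is to show the integral is of strictly smaller order than the main term; this rests on Weyl's inequality, which yields a power saving $\sup_{\alpha \in \mathfrak{m}} |f(\alpha)| \ll P^{1-\sigma}$ for some $\sigma(k) > 0$, combined with a mean-value bound such as Hua's inequality $\int_0^1 |f(\alpha)|^{2^k}\, d\alpha \ll P^{2^k - k + \varepsilon}$ to control the remaining average. Taking $s$ large enough (for instance $s \ge 2^k + 1$) makes the minor-arc contribution $o(n^{s/k-1})$, so $r(n) > 0$ for all large $n$.

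I expect the minor-arc estimate to be the main obstacle: it is exactly here that one must exploit cancellation in the Weyl sum $f(\alpha)$ when $\alpha$ is poorly approximable by rationals, and the quality of Weyl's and Hua's inequalities is what forces $s$ to be taken large. (This is also why the value of $g(k)$ produced by this argument is far from optimal; sharpening it is a separate and much harder enterprise.) An alternative, more elementary route replaces the analysis by Schnirelmann's density method: one shows via Cauchy--Schwarz that
\[
  \#\{\, m \le N : m = x_1^k + \cdots + x_s^k \,\} \;\ge\; \frac{\left(\sum_{m\le N} r(m)\right)^2}{\sum_{m \le N} r(m)^2},
\]
and the same equal-sums mean-value bound that controls the minor arcs shows the right-hand side is $\gg N$, so the $s$-fold sumset of $k$-th powers has positive Schnirelmann density; Schnirelmann's theorem then promotes positive density to a finite additive basis, again yielding $g(k)$. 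Either way the analytic heart is the estimate for the number of solutions of $x_1^k + \cdots + x_s^k = y_1^k + \cdots + y_s^k$.
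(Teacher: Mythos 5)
The paper does not prove this statement at all: it is quoted as the classical Hilbert--Waring theorem, serving only as motivation for the quaternionic analogue, with the reader referred to the survey of Vaughan and Wooley for proofs and history. So there is no in-paper argument to compare yours against, and the honest assessment is of your sketch on its own terms. As an outline of the standard analytic proof it is accurate: the identity $r(n)=\int_0^1 f(\alpha)^s e(-n\alpha)\,d\alpha$, the major/minor arc dissection, the reduction of $\mathfrak{S}(n)\gg 1$ to local solvability of $x_1^k+\cdots+x_s^k\equiv n \pmod{p^m}$, the use of Weyl's and Hua's inequalities to make the minor arcs $o(n^{s/k-1})$ for $s\ge 2^k+1$, and the absorption of the finitely many small exceptional $n$ into $g(k)$ are all the right ingredients, and the Schnirelmann-density alternative you mention is likewise a genuine elementary route (essentially Linnik's proof). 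But as written it is a plan, not a proof: every analytically hard step --- Weyl's inequality, Hua's inequality, the nonvanishing of the singular series, and the passage from positive density to an additive basis --- is invoked by name rather than established, and these are precisely where all the work lies. It is also worth noting that Hilbert's original proof proceeds quite differently, via polynomial identities for $(x_1^2+\cdots+x_5^2)^k$ and induction on $k$, with no analysis of exponential sums; the circle method came later and is what yields quantitative control of $g(k)$ and $G(k)$. In the context of this paper, the appropriate ``proof'' is simply the citation; if you want a self-contained argument you must supply the estimates you have black-boxed.
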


Generalizations of Waring's Problem have been studied in a variety of settings (for example, number fields \cite{siegel} and polynomial rings over finite fields \cite{car}).  Additionally, calculation of the exact values of $g(k)$ for all $k \geq 2$ was completed only relatively recently.  For an excellent and thorough exposition of the research on Waring's Problem and its generalizations, see Vaughan and Wooley \cite{wooley}.   We will examine a generalization of Waring's Problem to Quaternion rings. 

\begin{definition} Let $\qab$ denote the Quaternion ring
\[ \{\alpha_0 + \alpha_1 \bi + \alpha_2 \bj + \alpha_3 \bk \mid \alpha_n,a,b \in \Z, \bi^2 = -a, \bj^2 = -b, \bi\bj=-\bj\bi=\bk\}.\]
Let $\qab^n$ denote the additive group generated by all $n$th powers in $\qab$.
\end{definition}

Note here that $\bk^2 = -ab$, and that if $a = b = 1$, we have what are called the {\em Lipschitz Quaternions}.  We then have the following analogue of Waring's Problem.

\begin{conjecture}
For every integer $k \geq 2$ and all positive integers $a,b$ there exists a positive integer $\gab(k)$ such that every element of $\qab^k$ can be written as the sum of at most $\gab(k)$ $k$-th powers of elements of $\qab$.
\end{conjecture}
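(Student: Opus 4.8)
The plan is to prove the conjecture by reducing the four-dimensional problem to a collection of one-dimensional Waring problems over $\Z$ --- one for the real coordinate and one for each of the $\bi$, $\bj$, $\bk$ coordinates --- and then to invoke the Hilbert--Waring theorem to close each of them. The key structural observation is that $\qab^k$ is a subgroup of $\Z^4$, hence a finitely generated free abelian group, and that powers taken inside the commutative subrings $\Z[\bi]\cong\Z[\sqrt{-a}]$, $\Z[\bj]$, $\Z[\bk]$ stay within a single imaginary direction; this decouples the four coordinates, so that fixing one imaginary coordinate never disturbs the other two and affects only the real coordinate, which will be corrected last.

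First I would carry out the case $k=2$ completely, as it already contains the main idea and yields an explicit bound. Writing $q=\al_0+\al_1\bi+\al_2\bj+\al_3\bk$, a direct computation shows that all mixed imaginary cross terms cancel, giving
\[ q^2=(\al_0^2-a\al_1^2-b\al_2^2-ab\al_3^2)+2\al_0\al_1\bi+2\al_0\al_2\bj+2\al_0\al_3\bk. \]
In particular every square has even imaginary coordinates, and one checks that $\qtab=\Z+2\Z\bi+2\Z\bj+2\Z\bk$. To represent a general element $m_0+2m_1\bi+2m_2\bj+2m_3\bk$, I would use the three squares $(m_1+\bi)^2$, $(m_2+\bj)^2$, $(m_3+\bk)^2$ to set the three imaginary coordinates exactly, each contributing a controlled amount to the real coordinate. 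The remaining real discrepancy is then absorbed by real squares: any nonnegative integer is a sum of four squares $n^2$ by Lagrange's theorem, and any negative integer $-P$ is $(s\bi)^2+n_1^2+n_2^2+n_3^2+n_4^2$ after choosing $s$ with $as^2\ge P$. Since these corrective squares have zero imaginary part, they do not disturb the imaginary coordinates already fixed, and we obtain $\gab(2)\le 8$.

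For general $k$ I would run the same two-stage scheme. Working in $\Z[\bi]$, write $(\al_0+\bi)^k=A_k(\al_0)+B_k(\al_0)\bi$; varying $\al_0$ and taking bounded sums reduces control of the $\bi$-coordinate to a one-variable Waring problem for the integer polynomial $B_k$, and likewise for $\bj$ and $\bk$. Each such power perturbs the real coordinate by a bounded integer amount, and all accumulated real discrepancy is then absorbed by the real-coordinate Waring problem, which is finite by the Hilbert--Waring theorem. Granting that the one-variable imaginary problems have finite Waring numbers, the three imaginary bounds together with the real bound combine to a finite $\gab(k)$.

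The hard part will be the real coordinate when $k\equiv 0\pmod 4$. Closing the scheme requires representing \emph{every} integer, of either sign, as a bounded sum of $k$th powers; equivalently $-1\in\qab^k$ must itself be a bounded sum of $k$th powers. For odd $k$ this is immediate since $(-1)^k=-1$, and for $k\equiv 2\pmod 4$ the element $\bi^k=-a^{k/2}$ already supplies negative real parts. But for $k\equiv 0\pmod 4$ every ``obvious'' $k$th power ($n^k$, $\bi^k=a^{k/2}$, $\bj^k$, $\bk^k$) is a nonnegative real, and one must instead locate quaternions whose $k$th powers have negative real part and bound how many are needed. Whether this can be done uniformly is a genuinely arithmetic question depending on $a$ and $b$, and it is precisely this point that obstructs the argument in general and leaves the full statement as a conjecture.
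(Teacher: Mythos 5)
The statement you are addressing is stated in the paper as a \emph{conjecture}, and the paper offers no proof of it: the authors resolve only the case $k=2$ (Lemmas \ref{lbsq} and \ref{ubsq}, giving $3 \le \gab(2) \le 5$) and leave all $k \ge 3$ open. Your proposal, by your own admission in the final paragraph, also does not prove the statement, so there is no complete argument to certify. What you do establish is the $k=2$ case, and there your argument is correct but coarser than the paper's: you spend one square per pure coordinate via $(m_1+\bi)^2$, $(m_2+\bj)^2$, $(m_3+\bk)^2$ and then up to five more (one of the form $(s\bi)^2$ plus Lagrange) on the real residue, getting $\gab(2)\le 8$; the paper's Lemma \ref{ubsq} instead fixes all three pure coordinates with the \emph{single} square $v^2$ where $v = 1 + U\bi + \alpha_2\bj + \alpha_3\bk$ (possible because the real part of $v$ is $1$, so each pure coefficient of $v^2$ is exactly $2\alpha_n$), and then uses Legendre's three-squares theorem with a delicate choice of $U$ to dispose of the real remainder in at most four more squares, yielding the sharp upper bound $5$.

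For general $k$ your scheme has two genuine gaps beyond the sign problem you flag. First, the step ``reduces control of the $\bi$-coordinate to a one-variable Waring problem for the integer polynomial $B_k$, \dots which is finite by the Hilbert--Waring theorem'' is not justified: Hilbert--Waring concerns sums of $k$-th powers $n^k$, not sums of values of an arbitrary integer polynomial (or binary form) such as $B_k(\al_0,\al_1) = \mathrm{Im}\bigl((\al_0+\al_1\sqrt{-a})^k\bigr)/\sqrt{-a}$; whether boundedly many such values represent every integer in the relevant subgroup is itself an open Waring-type problem, and you concede this only parenthetically (``Granting that\dots''). Second, even the identification of the target group $\qab^k$ for $k\ge 3$ is nontrivial: since $q^k = c_k q + d_k$ for scalars depending on $\al_0$ and the norm, the pure part of any single $k$-th power is a scalar multiple of the pure part of $q$, and determining which sublattice of $\Z^4$ these generate (the analogue of Equation (\ref{sqform})) must be done before any counting argument can start. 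These are precisely the obstructions that leave the statement a conjecture, so your assessment of where the difficulty lies is accurate, but the proposal should not be mistaken for a proof of anything beyond a weak form of the $k=2$ case already covered by Lemma \ref{ubsq}.
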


\subsection*{Main Results}

We will examine sums of squares in Quaternion rings; that is, when $k=2$.   We are therefore looking to generalize Lagrange's Four Squares Theorem, the inspiration for Waring's initial conjecture.

\begin{theorem}[Lagrange's Four Squares Theorem] \label{lag4}
Any positive integer can be written as the sum of four integer squares.
\end{theorem}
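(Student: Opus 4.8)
The plan is to reduce the statement to the case of primes, using the multiplicativity of sums of four squares, and then to establish the prime case by a counting argument followed by Fermat's method of descent.

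First I would record \emph{Euler's four-square identity}, which asserts that a product of two sums of four squares is again a sum of four squares: for integers $a_0,\dots,a_3$ and $b_0,\dots,b_3$ one has
\[
(a_0^2+a_1^2+a_2^2+a_3^2)(b_0^2+b_1^2+b_2^2+b_3^2) = c_0^2+c_1^2+c_2^2+c_3^2,
\]
where each $c_i$ is an explicit integer bilinear form in the $a$'s and $b$'s. In the present context this identity is nothing other than the multiplicativity $N(\alpha\beta)=N(\alpha)N(\beta)$ of the norm $N(\alpha_0+\alpha_1\bi+\alpha_2\bj+\alpha_3\bk)=\alpha_0^2+\alpha_1^2+\alpha_2^2+\alpha_3^2$ on the Lipschitz quaternions $Q_{1,1}$, and so it requires no separate computation. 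Since every positive integer factors into primes and $1=1^2+0^2+0^2+0^2$, this identity reduces the theorem to showing that every prime $p$ is a sum of four squares. The case $p=2=1^2+1^2+0^2+0^2$ is immediate, so I may assume $p$ is odd.

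Next I would produce \emph{some} multiple of $p$ as a sum of four squares. As $x$ and $y$ range over $\{0,1,\dots,(p-1)/2\}$, the residues $x^2 \bmod p$ and the residues $-1-y^2 \bmod p$ each take $(p+1)/2$ distinct values; since $(p+1)/2+(p+1)/2>p$, two of them must coincide, giving $x^2+y^2+1\equiv 0 \pmod p$. Hence $x^2+y^2+1^2+0^2=mp$ for some integer $m$ with $1\le m<p$.

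The main work, and the step I expect to be the real obstacle, is the \emph{descent}: showing that if $mp$ is a sum of four squares for some $m$ with $1<m<p$, then $m'p$ is also a sum of four squares for some $m'<m$. Given $mp=x_0^2+x_1^2+x_2^2+x_3^2$, I would choose representatives $y_i\equiv x_i \pmod m$ with $|y_i|\le m/2$, so that $\sum y_i^2=mr$ with $0\le r\le m$. Both extreme values $r=0$ and $r=m$ can be shown to force $m\mid p$ and are therefore impossible for $1<m<p$, so in fact $0<r<m$. Applying Euler's identity to the product $(mp)(mr)=m^2rp$, the crucial point is that the congruences $y_i\equiv x_i\pmod m$ force every bilinear form $c_i$ to be divisible by $m$; dividing through by $m^2$ then exhibits $rp$ as a sum of four squares with $r<m$. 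Iterating the descent drives the multiplier down to $1$, yielding $p$ itself as a sum of four squares and completing the proof. The delicate bookkeeping lies precisely in verifying the divisibility of each $c_i$ by $m$ and in disposing of the degenerate cases $r=0$ and $r=m$.
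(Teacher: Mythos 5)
Your proposal is a correct outline of the standard classical proof of Lagrange's theorem: Euler's four-square identity (equivalently, multiplicativity of the norm on the Lipschitz quaternions $Q_{1,1}$) reduces the problem to primes, the pigeonhole argument on the $(p+1)/2$ values of $x^2$ and of $-1-y^2$ modulo $p$ produces a multiple $mp$ with $1\le m<p$, and Fermat descent drives $m$ down to $1$. The points you flag as delicate are exactly the right ones, and they do go through: $c_0=\sum x_iy_i\equiv\sum x_i^2\equiv 0\pmod m$, while each remaining $c_i$ is a sum of terms $x_jy_k-x_ky_j\equiv x_jx_k-x_kx_j=0\pmod m$; and in the degenerate cases $r=0$ forces $m\mid x_i$ for all $i$, while $r=m$ forces each $y_i=\pm m/2$, so that in either case $m^2\mid mp$ and hence $m\mid p$, impossible for $1<m<p$. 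Note, however, that the paper does not prove this statement at all: it is quoted as a classical result and used as a black box (e.g.\ in Case 1 of the proof of Lemma 2.4), so there is no proof of record to compare yours against; your argument is simply the standard one found in the references on sums of squares.
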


We prove the following general result giving the upper and lower bounds for $\gab(2)$ for any positive integers $a$ and $b$.

\begin{theorem} \label{sqbounds}
For all positive integers $a,b$, we have 
\[3 \leq \gab(2) \leq 5.\]
Additionally, each possible value of $\gab (2)$ (i.e., 3, 4, and 5) occurs infinitely often.
\end{theorem}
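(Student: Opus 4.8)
The plan is to start from the single computation the whole argument rests on: expanding the square of a general element $\al = \alpha_0 + \alpha_1\bi + \alpha_2\bj + \alpha_3\bk$. Using $\bi^2=-a$, $\bj^2=-b$, $\bk^2=-ab$ together with the anticommutation of the units, the mixed terms cancel in pairs and one finds
\[
\al^2 = (\alpha_0^2 - a\alpha_1^2 - b\alpha_2^2 - ab\alpha_3^2) + 2\alpha_0(\alpha_1\bi + \alpha_2\bj + \alpha_3\bk).
\]
Two facts follow at once. The imaginary part of every square has all three coordinates even while the real part can be any integer, so taking additive combinations gives $\qtab = \{c_0 + c_1\bi + c_2\bj + c_3\bk : c_0\in\Z,\ c_1,c_2,c_3\in 2\Z\}$. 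Moreover a square is a real integer exactly when $\alpha_0=0$, giving the nonpositive value $-(a\alpha_1^2+b\alpha_2^2+ab\alpha_3^2)$, or when $\alpha_1=\alpha_2=\alpha_3=0$, giving a perfect square $\alpha_0^2$. I would then fix the notation $\beta = c_0 + 2d_1\bi + 2d_2\bj + 2d_3\bk$ for a general target, reducing everything to a statement about the integers $c_0,d_1,d_2,d_3$.

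For the upper bound the strategy is to peel off the imaginary part with one square and then repair the real part with real squares. The square $(1 + d_1\bi + d_2\bj + d_3\bk)^2$ has exactly the imaginary part of $\beta$ and contributes $R_0 = 1 - a d_1^2 - b d_2^2 - ab d_3^2$ to the real part, leaving the integer $m = c_0 - R_0 = (c_0-1) + (a d_1^2 + b d_2^2 + ab d_3^2)$ to be written as a sum of real squares. When $m\ge 0$ this is exactly Lagrange's Four Squares Theorem (Theorem \ref{lag4}), for a total of five. Since $a d_1^2 + b d_2^2 + ab d_3^2\ge 0$, the case $m<0$ can occur only when $c_0$ is very negative relative to the size of the imaginary part; here I would represent $-m$ using one pure imaginary square $-(ax^2+by^2+abz^2)$ to overshoot and fill the nonnegative remainder with at most three perfect squares, invoking the three squares theorem. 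The point to verify is that $x,y,z$ can always be chosen so that the remainder is nonnegative and avoids the excluded form $4^k(8\ell+7)$, which keeps the real correction to at most four squares and the grand total to five.

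For the lower bound I would show that for every $a,b$ some element of $\qtab$ is not a sum of two squares. Writing $\beta = \al^2 + \gamma^2$ produces the coupled system $\alpha_0\alpha_1 + \gamma_0\gamma_1 = d_1$ (and likewise for $d_2,d_3$) together with one equation for the real part; a useful reformulation is that a purely imaginary sum of two squares forces $a(\alpha_1^2+\gamma_1^2) + b(\alpha_2^2+\gamma_2^2) + ab(\alpha_3^2+\gamma_3^2) = \alpha_0^2 + \gamma_0^2$. I would extract an obstruction for a carefully chosen $\beta$ (for instance one supported in a single imaginary direction, or $2\bi+2\bj+2\bk$ with a prescribed real part) by a congruence and size analysis of this system. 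It is worth noting up front that a one-prime reduction of the real part alone cannot work, since sums of two squares are surjective modulo every prime; the argument must therefore exploit the coupling between the real equation and the three imaginary equations.

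Finally, to see that each of $3,4,5$ is attained infinitely often, I would produce three infinite families of pairs $(a,b)$, organized by the residues of $a$ and $b$ (equivalently, by how well the forms $\alpha_0^2 - a\alpha_1^2 - b\alpha_2^2 - ab\alpha_3^2$ and $ax^2+by^2+abz^2$ represent integers). For a family where these forms represent enough integers I would sharpen the upper bound construction to three squares; for a family at the opposite extreme I would prove a matching lower bound of five by a congruence obstruction in the spirit of the previous paragraph; and an intermediate family would realize four. The main obstacle throughout is precisely this arithmetic of the forms $\alpha_0^2 - a\alpha_1^2 - b\alpha_2^2 - ab\alpha_3^2$ and $ax^2+by^2+abz^2$: both the sharp handling of negative real corrections in the upper bound and the separation of the three families come down to controlling which integers these forms (and their small sums) represent as $a$ and $b$ vary, and this local-to-global bookkeeping is where I expect the real work to lie.
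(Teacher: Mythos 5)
Your opening computation and the first move of the upper bound match the paper exactly: peel off the pure part with $v=1+\alpha_1\bi+\alpha_2\bj+\alpha_3\bk$, reduce to representing the integer $A=\alpha_0-1+a\alpha_1^2+b\alpha_2^2+ab\alpha_3^2$, and apply Lagrange when $A\ge 0$. But the case $A<0$ with $A$ of the excluded form $4^m(8\ell+7)$ is precisely where the real work lies, and the repair you propose --- overshoot with a single pure imaginary square and finish with Legendre --- provably fails in general. Take $a=b=128$ and the target $\alpha=-15$, so $A=-16=4^2\bigl(8(-1)+7\bigr)$. A pure imaginary square contributes $128x^2+128y^2+128^2z^2$, and
\[
-16+128x^2+128y^2+128^2z^2=16\bigl(8(x^2+y^2+128z^2-1)+7\bigr),
\]
which is \emph{always} of the excluded form whenever it is nonnegative; no choice of $x,y,z$ escapes, so three integer squares never finish the job. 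The paper's Lemma \ref{ubsq} gets around this by doing two things you do not: it perturbs the $\bi$-coefficient of the first square ($U=\alpha_1+2^{m-1}U_1$ rather than $U=\alpha_1$), and it uses a second square $w=2^{m-1}+\bigl((\alpha_1-U)/2^{m-1}\bigr)\bi$ with a \emph{nonzero real part}, whose contribution $-4^{m-1}$ shifts the residue to $4^{m-1}(8k+3)$ and out of the excluded class. Without some such modification of the real parts, your five-square bound does not close.

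The rest of the proposal is a plan rather than a proof. For the lower bound the paper exhibits explicit witnesses ($2+2\bi$ when $a\equiv 1,2\bmod 4$, $4+2\bi$ when $a\equiv 0,3\bmod 4$), first forcing $x_2=y_2=x_3=y_3=0$ by a coprimality-plus-size argument and then running a mod $4$ analysis on the real equation; you correctly observe that a single congruence on the real part cannot suffice, but you do not produce the obstruction. For the ``infinitely often'' claim your three-families outline has the right shape, but each family requires a separate substantive argument in the paper: $g_{1,b}(2)=3$ via Niven's theorem on sums of two squares in $\Z[\sqrt{-1}]$; $g_{4,4m+3}(2)=4$ via the witness $9+2\bj$ (not a sum of three squares, by a mod $4$ count of odd coordinates against the $\bj$-equation) together with a constructive four-square algorithm exploiting $a=n_1^2+n_2^2$; and $g_{4m,4n}(2)=5$ via the witness $8+2\bk$ and a mod $8$ analysis. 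None of these is supplied or reducible to the local-to-global bookkeeping you gesture at, so as written the proposal establishes neither bound nor the infinitude statement.
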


We prove the general upper and lower bounds in Section \ref{sec:gbc2}; more specific results, including the proof of the latter half of Theorem \ref{sqbounds}, are given in Section \ref{valueg}.  Note that for any positive integers $a$ and $b$, $\qab$ and $Q_{b,a}$ are naturally isomorphic; we therefore generally assume that $a \leq b$.

\section{Squares of Quaternions -- Upper and Lower Bounds \label{sec:gbc2}}

In this section we prove the upper and lower bounds of Theorem \ref{sqbounds}.  We will use the following classical result on sums of squares extensively; for this result and a more general look at sums of squares of integers see \cite{sumsofsq}.

\begin{theorem}[Legendre's Three Squares Theorem]
\label{leg3}
A positive integer $N$ can be written as the sum of three integer squares if and only if $N$ is not of the form $4^m(8\ell + 7)$ with $\ell, m$ non-negative integers.
\end{theorem}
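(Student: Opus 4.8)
The plan is to prove the two implications separately; the forward implication (that integers of the form $4^m(8\ell+7)$ are \emph{not} sums of three squares) is elementary, whereas the converse is the substantial part.

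\textbf{Necessity.} First I would note that modulo $8$ every square lies in $\{0,1,4\}$, so checking the ten possible unordered triples shows that a sum of three squares realizes every residue class modulo $8$ except $7$; in particular no $N \equiv 7 \pmod 8$ is a sum of three squares. To absorb the factor $4^m$ I would run a descent: since squares are $0$ or $1$ modulo $4$, the only way three of them can sum to $0$ modulo $4$ is for all three to be even, so if $N \equiv 0 \pmod 4$ and $N = x^2+y^2+z^2$ then $N/4 = (x/2)^2+(y/2)^2+(z/2)^2$. Writing $N = 4^m u$ with $4 \nmid u$ and iterating, $N$ is a sum of three squares if and only if $u$ is; since the excluded integers are exactly those with $u \equiv 7 \pmod 8$, this settles necessity.

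\textbf{Sufficiency, reduction to rational squares.} For the converse, suppose $N$ is not of the excluded form, so (writing again $N = 4^m u$ with $4 \nmid u$) we have $N > 0$ and $u \not\equiv 7 \pmod 8$; I want $N = x^2+y^2+z^2$ with $x,y,z \in \Z$. The key reduction is the Davenport--Cassels lemma, which applies here because for $f = x^2+y^2+z^2$ every rational vector has an integer vector within $f$-distance at most $3\cdot(1/2)^2 = 3/4 < 1$, obtained by rounding each coordinate to the nearest integer. A standard descent on denominators then shows that if $N$ is represented by $f$ over $\Q$ it is already represented over $\Z$, so it suffices to produce a representation of $N$ as a sum of three \emph{rational} squares.

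\textbf{Sufficiency, the local--global step and the $2$-adic computation.} To obtain the rational representation I would invoke the Hasse--Minkowski principle for the ternary form $x^2+y^2+z^2$: the form represents $N$ over $\Q$ if and only if it represents $N$ over $\R$ and over every $\Q_p$. Over $\R$ the sole condition is $N > 0$. For odd $p$ the form is isotropic over $\Q_p$ (any nondegenerate ternary form over $\mathbb{F}_p$ is isotropic, and Hensel's lemma lifts the zero), hence universal, so no constraint arises. Everything therefore concentrates at $p = 2$, and the heart of the proof is a Hilbert-symbol (Hasse-invariant) computation showing that $x^2+y^2+z^2$ fails to represent $N$ over $\Q_2$ precisely when $u \equiv 7 \pmod 8$, that is, exactly when $N = 4^m(8\ell+7)$. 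I expect this $2$-adic bookkeeping, and matching its outcome exactly to the stated form, to be the main obstacle, since $p=2$ is the one place where the Hensel argument above breaks down. A cleaner but less self-contained alternative is the classical Gauss--Dirichlet route, in which one uses Dirichlet's theorem on primes in arithmetic progressions to choose an auxiliary prime and the fact that the genus of $x^2+y^2+z^2$ consists of a single class to finish.
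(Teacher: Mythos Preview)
Your outline is a correct and standard route to Legendre's Three Squares Theorem: the mod~$8$ and descent-by-$4$ argument for necessity is fine, and for sufficiency the Davenport--Cassels lemma combined with Hasse--Minkowski (with the only genuine work concentrated at $p=2$) is one of the accepted modern proofs.

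However, there is nothing to compare against. In the paper this theorem is stated as a classical input and cited to Grosswald's \emph{Representations of Integers as Sums of Squares}; no proof is given or even sketched. The authors use Theorem~\ref{leg3} purely as a black box inside the proof of Lemma~\ref{ubsq} (and implicitly in Section~4), so your proposal is supplying something the paper deliberately outsources rather than reproducing or replacing an argument of theirs.
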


To study $\gab (2)$, we first need to establish the general form of squares of quaternions, and to characterize elements of $\qtab$.

	Let $\alpha=\alpha_0+\alpha_1\bi+\alpha_2\bj+\alpha_3\bk \in \qab$.  We call $\alpha_0$ the {\em real} part of $\alpha$ and $\alpha_1\bi+\alpha_2\bj+\alpha_3\bk$ the {\em pure} part of $\alpha$, with $\alpha_1, \alpha_2, \alpha_3$ the {\em pure coefficients}.  Then note that 
\begin{equation} \label{al2}
	\alpha^2=\alpha_0^2-a\alpha_1^2-b\alpha_2^2 - ab\alpha_3^2 + 2\alpha_0\alpha_1\bi + 2\alpha_0\alpha_2\bj + 2\alpha_0\alpha_3\bk.
\end{equation}    
We therefore have that all the pure coefficients of squares of quaternions, and therefore the pure coefficients of all elements of $\qab^2$, are even.  Additionally, any set of even pure coefficients can be achieved (for example, set $\alpha_0 = 1$ in Equation (\ref{al2})), as can any negative real coefficient (since we are assuming $a,b \geq 1$).  We therefore have
\begin{equation} \label{sqform}
	\qab^2 = \{\alpha_0 + 2\alpha_1 \bi + 2\alpha_2 \bj + 2\alpha_3 \bk \mid \alpha_n \in \mathbb{Z}\}.
\end{equation}    
\medskip

In 1946, Niven computed $g_{1,1}(2)$ and studied extensions of Waring's Problem in other various settings, including the complex numbers.

\begin{theorem}[Niven \cite{nivenquat}] 
Every element in $Q^2_{1,1}$ can be written as the sum of at most three squares in $Q_{1,1}$.  Additionally, $6+2\bi$ is not expressible as the sum of two squares in $Q_{1,1}$, so $g_{1,1}(2) = 3$.
\end{theorem}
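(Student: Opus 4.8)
The plan is to treat the two assertions separately: the upper bound $g_{1,1}(2)\le 3$ follows from the characterization (\ref{sqform}) together with Legendre's Three Squares Theorem, while the lower bound $g_{1,1}(2)\ge 3$ follows from a direct impossibility argument for $6+2\bi$.

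For the upper bound, first I would absorb the entire pure part into a single square. Given $q = c_0 + 2c_1\bi + 2c_2\bj + 2c_3\bk \in Q^2_{1,1}$ (every element has this shape by (\ref{sqform})), set $\beta = 1 + c_1\bi + c_2\bj + c_3\bk$. By (\ref{al2}) with $a=b=1$ we get $\beta^2 = \bigl(1-(c_1^2+c_2^2+c_3^2)\bigr) + 2c_1\bi + 2c_2\bj + 2c_3\bk$, so $q-\beta^2 = c_0 - 1 + c_1^2+c_2^2+c_3^2 =: R$ is a real integer. It then suffices to prove that every integer $R$ is a sum of at most two squares in $Q_{1,1}$ using only squares with vanishing pure part, since adding such squares to $\beta^2$ leaves the pure coefficients untouched. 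If $R\le 0$, then by Theorem \ref{lag4} the integer $-R$ is a sum of at most four integer squares, which split into two groups of at most three coordinates, giving $R=(g_1\bi+g_2\bj+g_3\bk)^2+(d_1\bi+d_2\bj+d_3\bk)^2$. If $R>0$, I would instead seek $R = u^2 - (g_1^2+g_2^2+g_3^2) = u^2 + (g_1\bi+g_2\bj+g_3\bk)^2$ with $u\in\Z$ and $u^2-R$ a nonnegative sum of three integer squares; by Theorem \ref{leg3} it is enough to choose $u$ large so that $u^2-R$ avoids the excluded form $4^m(8\ell+7)$. Since $u^2\equiv 0,1,4\pmod 8$, the value $u^2-R$ realizes the residues $-R,\,1-R,\,4-R\pmod 8$, and a short check (writing $r\equiv -R$, the triple $r,r+1,r+4$ cannot all lie in $\{0,4,7\}$) shows at least one of these lies in $\{1,2,3,5,6\}$, none of which has the excluded form. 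Choosing $u$ large with the corresponding residue completes the claim, and hence $q=\beta^2+\gamma^2+\delta^2$.

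For the lower bound I would show $6+2\bi$ is not of the form $\gamma^2+\delta^2$. Writing $\gamma=\gamma_0+\vec g$ and $\delta=\delta_0+\vec d$ with pure parts $\vec g,\vec d\in\Z^3$, equation (\ref{al2}) yields the real equation $\gamma_0^2+\delta_0^2-|\vec g|^2-|\vec d|^2=6$ together with the pure equation $\gamma_0\vec g+\delta_0\vec d=(1,0,0)$. First I would rule out $\gamma_0=0$ or $\delta_0=0$: if, say, $\gamma_0=0$, then $\delta_0\vec d=(1,0,0)$ forces $\delta_0=\pm1$ and $\vec d=(\pm1,0,0)$, making the real part $-|\vec g|^2\le 0\ne 6$. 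With $\gamma_0,\delta_0\ne 0$, I would finish by a congruence argument: reducing the pure equation mod $2$ constrains the parities of $\gamma_0,\delta_0$ and of the coordinates of $\vec g,\vec d$, and reducing the real equation mod $4$ (using that integer squares are $0$ or $1$ mod $4$) produces a parity count that these constraints cannot satisfy.

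I expect the congruence bookkeeping for $6+2\bi$ to be the main obstacle. Several cases must be checked according to which of $\gamma_0,\delta_0$ are odd, and a bare mod $4$ count may not close every case, so I anticipate having to push one or two cases to mod $8$ or to exploit the component equations $\gamma_0 g_2+\delta_0 d_2=0$ and $\gamma_0 g_3+\delta_0 d_3=0$ more carefully. The upper bound, by contrast, is a clean reduction to Theorem \ref{leg3} once the pure part has been absorbed into the single square $\beta^2$.
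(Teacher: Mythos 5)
First, a point of orientation: the paper does not actually prove this statement. It is quoted from Niven \cite{nivenquat}, and the paper's own route to $g_{1,1}(2)=3$ runs through Lemma \ref{lbsq} (which uses the witness $2+2\bi$ rather than $6+2\bi$) and the Section 3 theorem that $g_{1,b}(2)=3$, which peels off one square to land in the Gaussian integers and then invokes Lemma \ref{nivgausslem}. Measured against those, your upper bound is correct, complete, and genuinely different: you peel off $\beta^2$ to land in $\Z$ rather than in $\Z[\sqrt{-1}]$, and then represent the leftover integer $R$ by two more squares using Theorem \ref{lag4} when $R\le 0$ (splitting four integer squares $3+1$ across two pure quaternions) and Theorem \ref{leg3} when $R>0$. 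Your residue check is right: the excluded numbers $4^m(8\ell+7)$ are exactly those $\equiv 0,4,7\bmod 8$, and $\{r,r+1,r+4\}$ can never lie entirely in $\{0,4,7\}$ mod $8$, so a suitable $u$ exists. This is essentially the paper's Lemma \ref{ubsq} strategy specialized to $a=b=1$, where it terminates two squares early because $-(g_1\bi+g_2\bj+g_3\bk)^2=g_1^2+g_2^2+g_3^2$ is precisely the Legendre form; it also has the advantage of not needing the Gaussian-integer input at all.

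The lower bound, however, is a plan rather than a proof: you state the strategy and then explicitly defer the case analysis, unsure whether mod $4$ suffices. The missing ingredient is the reduction that makes the mod $4$ count work, namely forcing $g_2=d_2=g_3=d_3=0$ before reducing. From $\gamma_0 g_1+\delta_0 d_1=1$ you get $\gcd(\gamma_0,\delta_0)=1$; then $\gamma_0 g_2=-\delta_0 d_2$ forces $\gamma_0\mid d_2$ and $\delta_0\mid g_2$, so if $g_2,d_2$ were not both zero you would have $g_2^2+d_2^2\ge\gamma_0^2+\delta_0^2$ and the real part would be nonpositive, contradicting $6$; the $\bk$ components are handled identically. (This is exactly the opening move of the paper's Lemma \ref{lbsq}.) Once that is in place the argument does close at mod $4$ with no escalation to mod $8$: exactly one of the products $\gamma_0 g_1$, $\delta_0 d_1$ is odd, the all-odd pair contributes $1-1\equiv 0\bmod 4$ to the real part, and the remaining pair contributes $\gamma_0^2-g_1^2\equiv 0$, $1$, or $3\bmod 4$ (both cannot be odd), never $2\equiv 6\bmod 4$. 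Without the elimination of the $\bj$ and $\bk$ components, though, the parity count has too many free parameters and does not terminate, so as written the second assertion of the theorem is not established.
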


We extend this result to $\qab$ for all positive integers $a,b$.  The proofs for the lower bounds are similar to Niven's work (i.e., finding examples); the proofs for the upper bounds take more work.

\begin{lemma} \label{lbsq}
Suppose $a$ and $b$ are positive integers.  Then if
\begin{itemize}
	\item $a \equiv 1$ or $2 \bmod 4$, then $2 + 2\bi$ is not expressible as the sum of two squares in $\qab$; and
	\item $a \equiv 0$ or $3 \bmod 4$, then $4 + 2\bi$ is not expressible as the sum of two squares in $\qab$.
\end{itemize}    
\end{lemma}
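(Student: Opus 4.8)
The plan is to argue by contradiction: suppose the given element $R + 2\bi$ (with $R = 2$ or $R = 4$ according to the residue of $a$) equals $\al^2 + \beta^2$ for some $\al,\beta \in \qab$, and derive an impossibility. First I would expand both squares using Equation (\ref{al2}) and match the four coordinates. Writing $\al = \al_0 + \al_1\bi + \al_2\bj + \al_3\bk$ and $\beta = \beta_0 + \beta_1\bi + \beta_2\bj + \beta_3\bk$, the $\bi$-, $\bj$-, and $\bk$-coordinates give $\al_0\al_1 + \beta_0\beta_1 = 1$, $\al_0\al_2 + \beta_0\beta_2 = 0$, and $\al_0\al_3 + \beta_0\beta_3 = 0$, while the real coordinate gives $\al_0^2 + \beta_0^2 - a(\al_1^2 + \beta_1^2) - b(\al_2^2 + \beta_2^2) - ab(\al_3^2 + \beta_3^2) = R$. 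The first equation immediately forces $\gcd(\al_0,\beta_0) = 1$.

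The key step, and the one that makes the parameter $b$ disappear, is a positivity reduction. Because $\gcd(\al_0,\beta_0)=1$, every integer solution of $\al_0 X + \beta_0 Y = 0$ is a multiple of $(-\beta_0,\al_0)$; applying this to the $\bj$- and $\bk$-equations gives $(\al_2,\beta_2) = t(-\beta_0,\al_0)$ and $(\al_3,\beta_3) = w(-\beta_0,\al_0)$ for integers $t,w$. Substituting into the real-coordinate equation and setting $m = \al_0^2 + \beta_0^2 \ge 1$ yields $m(1 - bt^2 - abw^2) = R + a(\al_1^2 + \beta_1^2) \ge R > 0$. Since $a,b \ge 1$, the integer $1 - bt^2 - abw^2$ equals $1$ when $t=w=0$ and is $\le 0$ otherwise; as the right-hand side is positive, I conclude $t = w = 0$, i.e.\ $\al_2 = \beta_2 = \al_3 = \beta_3 = 0$. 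Setting $p = \al_0$, $q = \beta_0$, $x = \al_1$, $y = \beta_1$, the whole problem collapses to showing that $p^2 + q^2 - a(x^2 + y^2) = R$ has no integer solution subject to $px + qy = 1$ — a statement in which $b$ no longer appears, which explains why the hypotheses depend only on $a \bmod 4$.

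To finish I would reduce this two-variable equation modulo $4$. Since $px + qy = 1$ is odd, exactly one of $px, qy$ is odd, and by the symmetry of the reduced equation under $(p,x) \leftrightarrow (q,y)$ I may assume $p,x$ are odd and $qy$ is even, so $p^2 \equiv x^2 \equiv 1 \pmod 4$. A short check on $a \bmod 4$ then completes the argument: for $a \equiv 1$ the equation forces $q^2 - y^2 \equiv 2 \pmod 4$, which is impossible; for $a \equiv 0$ it forces $q^2 \equiv 3 \pmod 4$, impossible; and for $a \equiv 2$ or $a \equiv 3$ it forces both $q$ and $y$ to be odd, contradicting that $qy$ is even. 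In every case we reach a contradiction, proving that $R + 2\bi$ is not a sum of two squares.

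I expect the positivity reduction to be the crux. A direct congruence attack on the full four-coordinate system does not terminate at any fixed power of $2$, precisely because $b$ is an unconstrained parameter and the $\bj$-, $\bk$-coordinates contribute terms $b(\cdots)$ and $ab(\cdots)$ whose residues one cannot control. Recognizing that these coordinates are forced to vanish is what simultaneously removes $b$ and shrinks the problem to a clean mod-$4$ check; the only remaining subtlety is to exploit the parity coupling coming from $px + qy = 1$ rather than the real-coordinate congruence alone, which is exactly what disposes of the $a \equiv 2$ and $a \equiv 3$ cases.
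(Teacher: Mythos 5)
Your proposal is correct and follows essentially the same route as the paper's own proof: Bezout's identity applied to the $\bi$-coordinate gives coprimality of the real parts, the $\bj$- and $\bk$-coordinates combined with positivity of the real coefficient force those pure coefficients to vanish (eliminating $b$), and the surviving two-variable equation is ruled out modulo $4$ using the parity constraint from $px+qy=1$. Your parametrization $(\al_2,\beta_2)=t(-\beta_0,\al_0)$ is a slightly more explicit rendering of the paper's divisibility argument, and your residue-by-residue check on $a \bmod 4$ matches the paper's three-row parity table.
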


\begin{proof}
Let $x = x_0 + x_1 \bi + x_2 \bj + x_3 \bk$, and $y =  y_0 + y_1 \bi + y_2 \bj + y_3 \bk$, with $x_m, y_n \in \Z$ for $m,n \in \{0,1,2,3\}$. 
Then if $x^2 + y^2 = \alpha$ with $\alpha = \alpha_0 + 2\alpha_1 \bi + 2\alpha_2 \bj + 2\alpha_3 \bk \in \qtab$, we have
\begin{align}
	\alpha_0 &= x_0^2 + y_0^2 - a(x_1^2 + y_1^2) - b(x_2^2 + y_2^2) - ab (x_3^2 + y_3^2) \label{lb1}\\
	\alpha_1 &= x_0x_1 + y_0y_1 \label{lb2}\\
	\alpha_2 &= x_0x_2 + y_0y_2 \label{lb3}\\
	\alpha_3 &= x_0x_3 + y_0y_3. \label{lb4}
\end{align}

\underline{Case 1: ($a \equiv 1,2 \bmod 4$)}  Suppose $a \equiv 1,2 \bmod 4$, and let $\alpha = 2 + 2\bi$, so that $\alpha_0 = 2$, $\alpha_1 = 1$, and $\alpha_2=\alpha_3=0$.  Since $\alpha_1 = 1$, Equation (\ref{lb2}) and Bezout's Identity then imply that $x_0$ and $y_0$ must be relatively prime, since they have a linear combination equal to 1.  Then, by Equation (\ref{lb3}), we must have $x_0 | y_2$ and $y_0 | x_2$.  However, since $b \geq 1$, if $x_2, y_2 \neq 0$, Equation (\ref{lb1}) then implies that $\alpha_0 \leq 0$.  As $\alpha_0 = 2$, we must have $x_2 = y_2 = 0$.  A similar argument using Equation (\ref{lb4}) implies that $x_3 = y_3 = 0$.

By Equation (\ref{lb2}), since $\alpha_1 = 1$, we have that exactly one of the products $x_0x_1$ and $y_0y_1$ must be odd; we therefore assume $y_0$ and $y_1$ are odd.
The following table then shows that Equation (\ref{lb1}) has no solutions mod 4 if $a \equiv 1,2 \bmod 4$: 

\begin{center}
$\begin{array}{c|c|c}
	x_0 & x_1 & \text{Equation (\ref{lb1})} \bmod 4 \\ \hline
	\text{even} & \text{odd} & \alpha_0 = 2 \equiv 1 - 2a \\
	\text{even} & \text{even} & \alpha_0 = 2 \equiv 1 - a \\
	\text{odd} & \text{even} & \alpha_0 = 2 \equiv 2 - a
\end{array}$
\end{center}

Therefore $2 + 2\bi$ cannot be written as the sum of two squares in $\qab$.  
\medskip

\underline{Case 2: ($a \equiv 0,3 \bmod 4$)}  Suppose $a \equiv 0,3 \bmod 4$.  Then let $\alpha = 4 + 2\bi$.  By the same argument as above, we get 3 possibilities for Equation (\ref{lb1}) mod 4, none of which have solutions.  Therefore $4 + 2\bi$ cannot be written as the sum of two squares in $\qab$. 
\end{proof}

As both $2+2\bi$ and $4+2\bi$ are in $\qab^2$, this gives us the lower bound in Theorem \ref{sqbounds}.  We then turn to the upper bound; we establish an algorithm for expressing every element as a sum of squares.

\begin{lemma} \label{ubsq}
Every element in $\qtab$ can be written as a sum of at most five squares in $\qab$.
\end{lemma}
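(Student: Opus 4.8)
The plan is to reduce the problem to representing a single real integer and then to finish with the classical three- and four-square theorems. By Equation (\ref{sqform}) an arbitrary element of $\qtab$ has the form $\alpha = \alpha_0 + 2\alpha_1 \bi + 2\alpha_2 \bj + 2\alpha_3 \bk$ with $\alpha_n \in \Z$. The first step is to absorb the entire pure part into one square: taking $\sigma = (1 + \alpha_1 \bi + \alpha_2 \bj + \alpha_3 \bk)^2$ and using Equation (\ref{al2}), the pure part of $\sigma$ is exactly $2\alpha_1 \bi + 2\alpha_2 \bj + 2\alpha_3 \bk$, while its real part is $1 - a\alpha_1^2 - b\alpha_2^2 - ab\alpha_3^2$. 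Hence $\alpha - \sigma = R$ is a real integer, namely $R = \alpha_0 - 1 + a\alpha_1^2 + b\alpha_2^2 + ab\alpha_3^2$, and it suffices to write this one real integer $R$ as a sum of at most four squares in $\qab$.

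If $R \geq 0$, we are immediately done: by Lagrange's Four Squares Theorem (Theorem \ref{lag4}) we may write $R = r_0^2 + r_1^2 + r_2^2 + r_3^2$ with $r_i \in \Z$, and each $r_i^2$ is a (real) square in $\qab$. Together with $\sigma$ this expresses $\alpha$ as a sum of five squares, and here the bound would in fact often improve to four via Legendre's Three Squares Theorem.

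The remaining, and genuinely harder, case is $R < 0$, say $R = -M$ with $M \geq 1$; here we must manufacture a negative real value, which forces us into the pure directions, since $(w_1\bi + w_2\bj + w_3\bk)^2 = -(a w_1^2 + b w_2^2 + ab w_3^2)$. The strategy is to seek an identity of the shape $-M = s_1^2 + s_2^2 + s_3^2 - (a w_1^2 + b w_2^2 + ab w_3^2)$, that is, to represent the positive integer $M + s_1^2 + s_2^2 + s_3^2$ by the ternary form $a w_1^2 + b w_2^2 + ab w_3^2$, using the free real squares $s_i$ to steer the target into a representable residue class; combined with $\sigma$ this again yields at most five squares. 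An alternative balancing is to spend two squares reproducing the pure part together with an arbitrarily negative real part (for instance via $1 + (\alpha_1+m)\bi + \alpha_2\bj + \alpha_3\bk$ and $1 - m\bi$) and then to finish the now-positive remainder with Legendre's Three Squares Theorem.

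I expect this last case to be the main obstacle. For general $a$ and $b$ the form $a w_1^2 + b w_2^2 + ab w_3^2$ is far from universal, so one cannot simply quote Lagrange or Legendre; instead the adjustment squares must be chosen to defeat the congruence conditions of Legendre's Three Squares Theorem (Theorem \ref{leg3}) while keeping the total number of squares at four. The delicate point is that when $a$ (or $b$) is divisible by a high power of $2$, the increments available modulo $8$ become frozen, so the target cannot always be pushed out of the excluded form $4^m(8\ell+7)$ by a single adjustment. The argument must therefore split into cases according to $a$ and $b$ modulo small integers — much as in the proof of Lemma \ref{lbsq} — and treat these residue classes separately. This modular bookkeeping, rather than any single clever identity, is where the real work of the upper bound lies.
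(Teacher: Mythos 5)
Your reduction is exactly the paper's opening move: subtracting $(1+\alpha_1\bi+\alpha_2\bj+\alpha_3\bk)^2$ leaves the real integer $A=\alpha_0-1+a\alpha_1^2+b\alpha_2^2+ab\alpha_3^2$, and the case $A\ge 0$ is dispatched by Lagrange's Four Squares Theorem just as you say. But for $A<0$ you only name two candidate strategies and then explicitly defer ``the real work,'' so what you have is an outline, not a proof. The genuine gap is the subcase $A<0$ with $A=4^m(8\ell+7)$. Your second strategy (spend an extra square such as $(1-t\bi)^2$ or $(t\bi)^2$ to push the real remainder positive, then apply Legendre) does handle $A<0$ when $A$ is \emph{not} of the excluded form --- take $t=4^e$ with $2e$ exceeding the $4$-adic valuation of $A$ by at least $2$, so the relevant residue mod $8$ is untouched --- but when $A$ \emph{is} of the excluded form and $a$ is highly $2$-divisible, the adjustments available from your two proposed squares are frozen mod $8$, exactly as you fear, and the construction stalls.

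The missing ingredient is a single uniform identity, not the case analysis on $a,b$ modulo small integers that you predict. The paper's fifth square in this case has \emph{real part} $2^{m-1}$: one takes $w = 2^{m-1} + \bigl((\alpha_1-U)/2^{m-1}\bigr)\bi$ with $U=\alpha_1+2^{m-1}U_1$ and $U_1$ a large positive multiple of $4^{m+1}$. The real part of $w^2$ contributes $-4^{m-1}$ to the remainder while the divisibility of $U_1$ forces every other new real term into the class $4^{m+1}\Z$, so the remainder becomes
\[
4^m(8\ell+7)-4^{m-1}+4^{m+1}\ell_1=4^{m-1}\bigl(8(4\ell+3+2\ell_1)+3\bigr),
\]
which is never of the form $4^{m'}(8\ell'+7)$; Legendre's Three Squares Theorem (Theorem \ref{leg3}) then finishes with three integer squares, for five squares total. (The case $m=0$ needs the separate choice $w=1+U_1\bi$ with $8\mid U_1$, landing the remainder in residue $6$ mod $8$.) The size conditions on $U_1$ guarantee positivity of the remainder and cancellation of the $\bi$-coefficient, and the whole device works uniformly in $a$ and $b$. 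Without this step, or an equivalent escape from the excluded form, your argument does not close.
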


\begin{proof}
Let $\alpha=\alpha_0+2\alpha_1\bi+2\alpha_2\bj+2\alpha_3\bk \in \qab^2$; We want to show that we can represent $\alpha$ as a sum of squares of no more than five quaternions. 

Let $v = 1 + U\bi + \alpha_2\bj + \alpha_3 \bk$ for some $U\in \Z$, and note that
\[\alpha - v^2 = \alpha_0 - 1 + aU^2 + b\alpha_2^2 + ab\alpha_3^2 + 2(\alpha_1 - U)\bi.\]
If we also let $A = \alpha_0 - 1 +a\alpha_1^2 + b\alpha_2^2 + ab\alpha_3^2$, we have
\begin{equation} \label{vau}
\alpha - v^2 = A + a(U^2 - \alpha_1^2) + 2(\alpha_1 - U)\bi.
\end{equation}
We then have three cases: (1) when $A \geq 0$, (2) when $A < 0$ and $A$ cannot be written as $4^m (8 \ell + 7)$ for any non-negative integer $m$ and $\ell \in \Z$, and (3) when $A < 0$ and $A = 4^m (8 \ell + 7)$ for some non-negative integer $m$ and $\ell \in \Z$.

\underline{Case 1: $A \geq 0$.}  If $A \geq 0$, then by Lagrange's Four Squares Theorem (Theorem \ref{lag4}), there exists $w,x,y,z \in \Z$ such that $A = w^2 + x^2 + y^2 + z^2$.  Letting $U = \alpha_1$,  Equation (\ref{vau}) becomes
\[\alpha - v^2 = A = w^2 + x^2 + y^2 + z^2,\]
so we can represent $\alpha$ as the sum of five squares.

\underline{Case 2: $A < 0$ and $A \neq 4^m (8 \ell + 7)$.} In this case we again let $U = \alpha_1$, so that $\alpha - v^2 = A$.  Then let $e_1$ be the greatest exponent of 4 such that $4^{e_1}$ divides $A$, and let $e_2$ be the least exponent of 4 such that $4^{2e_2} + A \geq 0$.  We then let $e = \max\{e_1+1,e_2\}$, and let $w = 4^e \bi$.

We then have $\alpha - v^2 - w^2 = A + a4^{2e} \geq 0$.  Additionally, since $2e \geq 2e_1 + 2$, if $A$ cannot be written in the form $4^m (8 \ell + 7)$, then neither can $A + 4^{2e}$.  Therefore by Legendre's Three Squares Theorem (Theorem \ref{leg3}), there exist $x,y,z \in \Z$ such that $A+4^{2e} = x^2 + y^2 + z^2$.  So
\[\alpha - v^2 - w^2 = A + 4^{2e} = x^2 + y^2 + z^2,\]
so we can represent $\alpha$ as the sum of five squares.

\underline{Case 3: $A < 0$ and $A = 4^m (8 \ell + 7)$.}  We first treat the case when $m > 0$.  Here we let
\[w = 2^{m-1} + \left(\frac{\alpha_1 - U}{2^{m-1}}\right)\bi\]
and choose $U = \alpha_1 + 2^{m-1}U_1$, where $U_1$ satisfies the following 3 conditions:
\begin{description}
 	\item[(a)] $4^{m+1} | U_1$, 
 	\item[(b)] $U_1 > -\displaystyle\frac{2^{m}\alpha_1}{4^{m-1}+1}$, and
 	\item[(c)] $U_1 > \displaystyle\frac{A-4^{m-1}}{a}.$
\end{description}
Note that it is always possible to meet these conditions; for example, $U_1 = 4^{m+1}|A| \cdot \max\{1,|\alpha_1|\}$ satisfies all three.  We then have
\begin{align*}
	\alpha - v^2 - w^2 & = \left(A + a(U^2 - \alpha_1^2) + 2(\alpha_1 - U)\bi\right) - \left(4^{m-1} + 2 (\alpha_1 - U)\bi - a \left(\frac{\alpha_1 - U}{2^{m-1}}\right)^2\right) \\
    & = A + a(\alpha_1^2 + 2^m \alpha_1U_1+ 4^{m-1} U_1^2 - \alpha_1^2) - 4^{m-1} + aU_1^2 \\
    & = A - 4^{m-1} + aU_1\left(2^m \alpha_1 + (4^{m-1}+1)U_1\right).
\end{align*}
Note that condition (b) on $U_1$ ensures the quantity in parentheses must be positive, and condition (c) ensures that $\alpha - v^2 - w^2$ is positive.  Letting $A = 4^m(8 \ell + 7)$ and (since $4^{m+1} | U_1$) the remainder of the equation equals $4^{m+1}\ell_1$ for some $\ell_1 \in \Z$, we have
\begin{align*}
	\alpha - v^2 - w^2 & = 4^m(8 \ell + 7) - 4^{m-1} + 4^{m+1}\ell_1 \\
    & = 4^{m-1}\left[4(8 \ell + 7) - 1 + 16 \ell_1\right] \\
    & = 4^{m-1}\left[8(4\ell + 3 + 2\ell_1) + 3\right],
\end{align*}
Since this is not of the form excluded by Legendre's Three Squares Theorem, there exist $x,y,z \in \Z$ such that $\alpha - v^2 - w^2 = x^2 + y^2 + z^2$, so we can represent $\alpha$ as the sum of five squares.

Lastly, we treat the case when $A = 8 \ell + 7$ for some negative integer $\ell$.  Here we let $U = \alpha_1 + U_1$ and $w = 1 + U_1 \bi$, choosing $U_1$ such that $8 \mid U_1$ and $U_1 > \max\{|A|, |\alpha_1|\}$.  Then
\begin{align*}
	\alpha - v^2 - w^2 & = \left(A + a(U^2 - \alpha_1^2) + 2(\alpha_1 - U)\bi\right) - (1 - U_1\bi)^2 \\
    & = A + a(2\alpha_1U_1+ U_1^2) - 1 + aU_1^2 \\
    & = 8 \ell +6 + 8 \ell_1,
\end{align*}
where we have $\ell_1  + \ell \geq 0$ by the conditions on $U_1$.  Since this is a positive number that is 6 mod 8, it is expressible as the sum of 3 integer squares by Legendre's Three Squares Theorem.  So we can represent $\alpha$ as the sum of five squares here and in all cases.
\end{proof}

Lemmas \ref{lbsq} and \ref{ubsq} combined give the bounds for $\gab (2)$ in Theorem \ref{sqbounds}.

\section{Values of $\gab(2)$ \label{valueg}}

In this section, we establish exact values for $\gab(2)$ for several infinite families of Quaternion rings, and for each of the possible values of $\gab(2)$.  We note that the methods for showing each are different: for example, to show $\gab(2)=3$, all we need is an algorithm to express every element in $\qtab$ as a sum of 3 squares, and to show $\gab(2)=5$, all we need is to find an element that cannot be expressed as the sum of 4 squares.

\subsection{$\gab(2) = 3$}

We examine $Q_{1,b}$, where $b \in \N$.  We can view $Q_{1,b}$ as an extension of the Gaussian integers $\Z[\sqrt{-1}]=\{x+y\sqrt{-1} \mid x,y \in \Z\}$ by adjoining $\bj$ and $\bk$.  The following Lemma then provides a shortcut for representing elements of $Q_{1,b}$ as sums of squares.
    
\begin{lemma}[Theorem 2 of \cite{nivengauss}] \label{nivgausslem}
    	The equation $\alpha_0+2\alpha_1\bi=x^2+y^2$ is solvable in $\mathbb{Z}[\sqrt{-1}]$ if $\alpha_0/2$ and $\alpha_1$ are not both odd integers.
\end{lemma}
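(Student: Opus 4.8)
The plan is to use that inside $Q_{1,b}$ the element $\bi$ is a unit with $\bi^2 = -1$, so that over $\Z[\sqrt{-1}] = \Z[\bi]$ a sum of two squares factors completely: writing $z = \alpha_0 + 2\alpha_1\bi$ and setting $u = x + \bi y$, $v = x - \bi y$, we have $x^2 + y^2 = uv$ with $u + v = 2x$ and $u - v = 2\bi y$. Conversely, any factorization $z = uv$ with $u \equiv v \pmod 2$ yields $x = (u+v)/2$ and $y = -\bi(u-v)/2$ in $\Z[\bi]$ with $x^2 + y^2 = z$. So the first step is to reduce the claim to the reformulation: \emph{$z$ is a sum of two squares in $\Z[\bi]$ exactly when $z = uv$ for some $u, v$ with $u \equiv v \pmod 2$.} This turns the problem into a divisibility question about the single ramified prime $\pi = 1 + \bi$ over $2$, for which $\pi^2 = 2\bi$, $2 = -\bi\pi^2$, $\bi \equiv 1 \pmod{\pi}$, and $\Z[\bi]/(\pi) \cong \mathbb{F}_2$ (so any two elements prime to $2$ are congruent modulo $\pi$).

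Next I would prescribe an explicit factorization in each case cut out by $\alpha_0 \bmod 4$ and the parity of $\alpha_1$. If $\alpha_0$ is odd then $z \equiv 1 \pmod 2$ and the trivial factorization $u = z$, $v = 1$ already has $u \equiv v \pmod 2$. If $\alpha_0$ is even, write $z = 2w$ with $w = \tfrac{\alpha_0}{2} + \alpha_1\bi$; the hypothesis that $\tfrac{\alpha_0}{2}$ and $\alpha_1$ are not both odd leaves exactly two subcases. When $\tfrac{\alpha_0}{2}$ and $\alpha_1$ have opposite parity, $w$ is prime to $2$, and I would take $u = \pi$, $v = -\bi\pi w$, so that $uv = -\bi\pi^2 w = 2w = z$ while $u - v = \pi(1 + \bi w)$ is divisible by $\pi^2$ because $w$ prime to $2$ forces $1 + \bi w \equiv 0 \pmod{\pi}$. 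When $\tfrac{\alpha_0}{2}$ and $\alpha_1$ are both even, $z = 4\zeta$ and the factorization $u = 2\zeta$, $v = 2$ succeeds since $u - v = 2(\zeta - 1)$. The only remaining class, $\tfrac{\alpha_0}{2}$ and $\alpha_1$ both odd, is the one the hypothesis excludes.

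The main obstacle is the opposite-parity even case, and it is instructive to see why the obvious attempts fail. There $w$ itself need not be a sum of two squares (it may be $\equiv \bi \pmod 2$), so one cannot simply multiply representations of $2$ and $w$ via the Brahmagupta--Fibonacci identity; and the naive factorizations $u = 2$, $v = w$ or $u = 2w$, $v = 1$ do not satisfy $u \equiv v \pmod 2$. The point is that a valid factorization must balance the $\pi$-adic valuation, giving each of $u$ and $v$ exactly one factor of $\pi$; after cancelling that common $\pi$, the two cofactors are both prime to $2$ and hence automatically congruent modulo $\pi$, which is exactly what upgrades $\pi \mid u - v$ to $\pi^2 \mid u - v$. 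Verifying that this balanced factorization always exists, via the identities $\pi^2 = 2\bi$ and $\bi \equiv 1 \pmod{\pi}$, is the crux of the argument.
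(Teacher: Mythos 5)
Your argument is correct and complete. Note first that the paper does not prove this lemma at all: it is imported verbatim as Theorem 2 of Niven's 1940 paper \cite{nivengauss}, so there is no in-paper proof to compare against. Your reduction is sound: $x^2+y^2=(x+\bi y)(x-\bi y)$, and a factorization $z=uv$ with $2\mid u-v$ recovers $x=(u+v)/2$, $y=(u-v)/(2\bi)$ in $\Z[\bi]$, so solvability is equivalent to the existence of a factorization with congruent factors mod $2$. Your three explicit factorizations check out: for $\alpha_0$ odd, $z\equiv 1\bmod 2$ and $u=z$, $v=1$ works; for $\alpha_0/2$, $\alpha_1$ of opposite parity, $w=\alpha_0/2+\alpha_1\bi$ has odd norm, hence $w\equiv 1$ and $\bi w\equiv 1$ modulo $\pi=1+\bi$, so $u=\pi$, $v=-\bi\pi w$ gives $uv=-\bi\pi^2w=2w=z$ and $\pi^2\mid u-v=\pi(1+\bi w)$; for both even, $u=2\zeta$, $v=2$ with $z=4\zeta$ is immediate. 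These exhaust every case permitted by the hypothesis, and the lemma only asserts sufficiency, so nothing further is needed. Your balanced-$\pi$-valuation viewpoint is essentially the same mechanism underlying Niven's original proof (which also proceeds by exhibiting explicit factorizations of $x^2+y^2=(x+\bi y)(x-\bi y)$ case by case on $\alpha_0\bmod 4$ and the parity of $\alpha_1$), but your packaging through the single ramified prime over $2$ makes the role of the excluded residue class $\alpha_0\equiv 2\bmod 4$, $\alpha_1$ odd transparent. One small presentational point: the phrase ``I would take'' understates what you have done --- the factorizations are fully specified and verified, so this reads as a proof, not a plan.
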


Note that this Lemma also implies that $g_{\Z[\sqrt{-1}]}(2) = 3$.  

\begin{theorem} 
For all $b \in \N$, every element in $Q_{1,b}^2$ can be written as the sum of at most three squares in $Q_{1,b}$.  Therefore $g_{1,b} (2) = 3$ for all $b \in \N$.
\end{theorem}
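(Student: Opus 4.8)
The plan is to reuse the square-subtraction device from the proof of Lemma \ref{ubsq}, specialized to $a=1$, and then close out with Niven's Gaussian result (Lemma \ref{nivgausslem}) in place of Legendre's Three Squares Theorem. The key structural observation is that when $a=1$ the subring $\{x_0 + x_1\bi \mid x_0,x_1\in\Z\}\subseteq Q_{1,b}$ is exactly a copy of the Gaussian integers $\Z[\sqrt{-1}]$, and a square there, $(x_0+x_1\bi)^2 = (x_0^2-x_1^2) + 2x_0x_1\bi$, is a genuine square in $Q_{1,b}$. So if I can peel off a single quaternion square that annihilates the $\bj$ and $\bk$ parts and leaves behind an element of this Gaussian subring, the problem reduces to writing a Gaussian integer as a sum of two squares.

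Concretely, for $\alpha = \alpha_0 + 2\alpha_1\bi + 2\alpha_2\bj + 2\alpha_3\bk \in Q_{1,b}^2$ I would take $v = 1 + \alpha_1\bi + \alpha_2\bj + \alpha_3\bk$, i.e.\ the element $v$ of Lemma \ref{ubsq} with the choice $U = \alpha_1$. Equation (\ref{vau}) with $a=1$ and $U=\alpha_1$ then collapses to $\alpha - v^2 = A$, where $A = \alpha_0 - 1 + \alpha_1^2 + b\alpha_2^2 + b\alpha_3^2 \in \Z$: the $\bi$, $\bj$, and $\bk$ components all cancel, leaving a plain integer. Now I apply Lemma \ref{nivgausslem} to $A = A + 2\cdot 0\cdot \bi$. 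Since the $\bi$-coefficient is $0$ (certainly not an odd integer), the hypothesis that ``$A/2$ and $0$ are not both odd'' holds automatically, so $A = x^2 + y^2$ is solvable with $x,y \in \Z[\sqrt{-1}] \subseteq Q_{1,b}$. Hence $\alpha = v^2 + x^2 + y^2$ is a sum of three squares in $Q_{1,b}$, giving the upper bound $g_{1,b}(2)\le 3$.

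For the lower bound I would simply invoke Lemma \ref{lbsq}: since $a=1\equiv 1 \bmod 4$, the element $2+2\bi \in Q_{1,b}^2$ is not a sum of two squares, so $g_{1,b}(2) > 2$, and therefore $g_{1,b}(2)=3$. The only real content --- and where I expect all the mileage to come from --- is the contrast with Lemma \ref{ubsq}: there, writing the leftover real integer $A$ as a sum of integer squares required $A\ge 0$ and $A\neq 4^m(8\ell+7)$, which is precisely what forced the correction squares and drove the count up to five. Here the relation $\bi^2=-1$ lets me represent \emph{any} integer $A$ whatsoever --- of either sign and with no congruence restriction --- as a sum of just two Gaussian squares, so no correction squares are needed and the total drops to three. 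The main point to check carefully is therefore just this: that Lemma \ref{nivgausslem} genuinely applies to every integer $A$ (in particular to negative $A$), which it does, since its only obstruction is the ``both odd'' case, and that case is vacuous once the $\bi$-coefficient has been made $0$.
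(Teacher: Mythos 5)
Your proof is correct and follows essentially the same strategy as the paper's: subtract a single square $z^2$ with $z = 1 + U\bi + \alpha_2\bj + \alpha_3\bk$ to land in the Gaussian subring $\Z[\sqrt{-1}] \subset Q_{1,b}$, then finish with Lemma \ref{nivgausslem} for the remaining two squares and Lemma \ref{lbsq} for the lower bound. The only difference is your choice $U = \alpha_1$, which makes the leftover $\bi$-coefficient exactly zero, whereas the paper takes $U \in \{0,1\}$ matching the parity of $\alpha_1$ so that $\alpha_1 - U$ is even; either choice satisfies the ``not both odd'' hypothesis of Lemma \ref{nivgausslem}, so both arguments go through.
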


\begin{proof}
	Let $\alpha=\alpha_0+2\alpha_1\bi+2\alpha_2\bj+2\alpha_3\bk \in Q^2_{1,b}$; we wish to find $x,y,z \in Q_{1,b}$ such that $\alpha = x^2 + y^2 + z^2$.  Since $\Z[\sqrt{-1}] \subset Q_{1,b}$, Lemma \ref{nivgausslem} implies that it is sufficient to find $z \in Q_{1,b}$ such that $\alpha - z^2 \in \Z[\sqrt{-1}]$ and satisfies the hypotheses of Lemma \ref{nivgausslem}. 
    
    Therefore, let $z=1+U\bi+\alpha_2\bj+\alpha_3\bk$, where $U=0$ if $\alpha_1$ is even and $U=1$ if $\alpha_1$ is odd. We then examine $\alpha-z^2$.
    \begin{align*}
    	\alpha-z^2&=\alpha_0+2\alpha_1\bi+2\alpha_2\bj+2\alpha_3\bk - 1+U^2+b\alpha_2^2+b\alpha_3^2 - 2U\bi -2\alpha_2\bj-2\alpha_3\bk\\
    	&=\alpha_0-1 + U^2+b\alpha_2^2+b\alpha_3^2+2(\alpha_1-U)\bi
    \end{align*}
Note that if $\alpha_1$ is even, then $U=0$, so $\alpha_1-U$ is even; conversely, if $\alpha_1$ is odd, then $U=1$, so $\alpha_1-U$ is again even.  We can therefore apply Lemma \ref{nivgausslem} to find $x,y \in \Z[\sqrt{-1}] \subset Q_{1,b}$ such that $\alpha - z^2 = x^2 + y^2$. 
\end{proof}

We note that the proof relies on the fact that squares in the Gaussian integers can be easily characterized.  This is not generally true of imaginary quadratic fields (see \cite{eljoseph} and Theorem 3 of \cite{nivengauss}).

\subsection{$\gab(2) = 4$}

We combine a standard lower bound proof and a constructive upper bound proof to find a family of Quaternion rings with $\gab(2) = 4$.

\begin{lemma} \label{43lb}
There exist elements in $Q_{4m,4n+3}^2$ that are not the sum of three squares.
\end{lemma}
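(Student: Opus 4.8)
The plan is to exhibit a single explicit element and show, by a descent through the scalar equations that govern a three-square representation, that it cannot be written as $x^2+y^2+z^2$. A natural candidate is $\alpha = 4 + 2\bi$: it lies in $Q_{4m,4n+3}^2$ by (\ref{sqform}), and since $a = 4m \equiv 0 \bmod 4$, Lemma \ref{lbsq} already tells us it is not a sum of two squares, so only the third square is genuinely at stake. Writing $x = x_0 + x_1\bi + x_2\bj + x_3\bk$ and similarly for $y,z$, expanding $x^2+y^2+z^2$ via (\ref{al2}) yields four scalar equations: a real-part equation $(R)$ of the shape $\alpha_0 = \sum_j x_{j,0}^2 - a\sum_j x_{j,1}^2 - b\sum_j x_{j,2}^2 - ab\sum_j x_{j,3}^2$, together with three pure-part equations $\sum_j x_{j,0}x_{j,1} = \alpha_1$, $\sum_j x_{j,0}x_{j,2} = \alpha_2$, $\sum_j x_{j,0}x_{j,3} = \alpha_3$, where here $\alpha_0 = 4$, $\alpha_1 = 1$, $\alpha_2 = \alpha_3 = 0$ and each sum runs over the three quaternions.

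First I would extract the parity skeleton. Because $a \equiv 0$, $b \equiv 3$, and $ab \equiv 0 \bmod 4$, reducing $(R)$ modulo $4$ collapses to $\alpha_0 \equiv P + R \bmod 4$, where $P$ and $R$ count the indices $j$ with $x_{j,0}$, respectively $x_{j,2}$, odd. With $\alpha_0 = 4$ this forces $P + R \equiv 0$, while the $\bi$-equation $\sum_j x_{j,0}x_{j,1} = 1$ forces $P \geq 1$, so $P + R = 4$. The $\bj$-equation $\sum_j x_{j,0}x_{j,2} = 0$ then does the decisive work: read modulo $2$ it says the number of indices where $x_{j,0}$ and $x_{j,2}$ are simultaneously odd must be even, which rules out the distributions $(P,R) = (3,1)$ and $(1,3)$ and pins the configuration down completely --- exactly two of the three quaternions (say the first two) have both $x_{j,0}$ and $x_{j,2}$ odd, and the third has both even. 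A further mod-$2$ reading of the $\bi$-equation then forces exactly one of $x_{1,1}, x_{2,1}$ to be odd.

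With this rigid structure in hand, the remaining task is to derive a contradiction from $(R)$, and this is the step I expect to be the main obstacle. It is genuinely harder than its two-square analogue in Lemma \ref{lbsq}: there the identity $x_0x_1 + y_0y_1 = 1$ gave $\gcd(x_0,y_0)=1$ via Bezout, which collapsed the remaining pure coefficients to zero and produced a clean contradiction modulo $4$; with three squares that collapse is unavailable, and a short check shows $(R)$ has \emph{no} obstruction modulo $8$. The plan is therefore to substitute the forced parities back into $(R)$ and reduce the surviving Diophantine condition either to representing a number of Legendre's excluded form $4^k(8\ell+7)$ as a sum of three integer squares (invoking Theorem \ref{leg3}), or to a congruence with no solutions --- for instance forcing a square to be $\equiv 3 \bmod 4$. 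The delicate point is showing this obstruction persists uniformly across all admissible values of the remaining free variables and of the even coordinates of the third quaternion; should $4 + 2\bi$ itself survive, one would pass to a larger element of the same shape $\alpha_0 + 2\bi$ selected so that the reduction is forced.

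Finally I would check that nothing in the argument used $m$ or $n$ beyond $a \equiv 0$ and $b \equiv 3 \bmod 4$, so that the same element witnesses the failure of three-square representability for every $Q_{4m,4n+3}$; combined with the constructive upper bound to follow, this will give $g_{4m,4n+3}(2) = 4$.
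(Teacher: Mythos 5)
Your parity analysis is carried out correctly as far as it goes, but the proof is not complete, and you acknowledge as much: after reducing to the configuration $(P,R)=(2,2)$ with exactly two quaternions having both $x_{j,0}$ and $x_{j,2}$ odd, you defer the actual contradiction to an unspecified further argument, and you concede you may need to swap out the witness entirely (``should $4+2\bi$ itself survive, one would pass to a larger element''). That deferred step is the whole content of the lemma, and nothing in the proposal shows it can be carried out; the surviving configuration is not obviously inconsistent, and your own observation that there is no obstruction mod $8$ suggests the descent may simply fail for $4+2\bi$. The structural reason your candidate runs into trouble is the choice of which pure coefficient is nonzero: you put the $2$ on $\bi$, but $-a\equiv 0\bmod 4$, so the $x_{j,1}$ disappear from the real-part congruence mod $4$ and the $\bi$-equation contributes only the weak constraint $P\ge 1$. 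The variables that \emph{do} appear in the real-part congruence are the $x_{j,2}$ (since $-b\equiv 1\bmod 4$), and your element imposes only the homogeneous condition $\sum x_{j,0}x_{j,2}=0$ on them, which the $(2,2)$ configuration satisfies.

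The paper closes this loop with a different witness, $9+2\bj$, chosen so that the nonzero pure coefficient sits in the direction coupled to $b\equiv 3\bmod 4$ and the real part is $1\bmod 4$. Then the real-part equation mod $4$ reads $\sum x_{j,0}^2+\sum x_{j,2}^2\equiv 1$, forcing exactly $1$ or $5$ of the six coordinates $x_0,y_0,z_0,x_2,y_2,z_2$ to be odd, while the $\bj$-equation $x_0x_2+y_0y_2+z_0z_2=1$ requires an odd number of the three products to be odd --- impossible with $1$ odd coordinate (every product even) or with $5$ (exactly two products odd). The argument terminates in three lines with no case left open. If you replace $4+2\bi$ by an element of the form $(4k+1)+2\bj$ (any square real part congruent to $1$ bmod $4$ works, e.g.\ $9$, so that the element is visibly in $Q_{4m,4n+3}^2$), your framework produces exactly this proof; as written, however, the proposal has a genuine gap at its central step.
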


\begin{proof}
	Suppose that there exist $x,y,z \in Q_{4m,4n+3}$ such that  $x^2 + y^2 + z^2 = 9 + 2\bj$.  Letting
	\begin{align*}
		x &=x_0+x_1\bi+x_2\bj+x_3\bk\\
		y &=y_0+y_1\bi+y_2\bj+y_3\bk\\
		z &=z_0+z_1\bi+z_2\bj+z_3\bk,
	\end{align*}
the resulting equations for the real and $\bj$ coefficients of $9+2\bj$ are, respectively:
	\begin{align}
		\begin{split}
			x_{0}^{2}+y_{0}^{2}+z_{0}^{2} - 4m(x_{1}^{2}+y_{1}^{2}+z_{1}^{2}) &- (4n+3)(x_{2}^{2}+y_{2}^{2}+z_{2}^{2})\\
			& - (4m)(4n+3)(x_{3}^{2}+y_{3}^{2}+z_{3}^{2})= 9
		\end{split}\label{43r}\\
		x_{0}x_{2}+y_{0}y_{2}+z_{0}z_{2}&=1.	\label{43j}
	\end{align}
	
	Examining Equation (\ref{43r}) mod 4, we have:
	\begin{equation}
	 x_{0}^{2}+y_{0}^{2}+z_{0}^{2} + x_{2}^{2}+y_{2}^{2}+z_{2}^{2} \equiv1\bmod 4. \label{43rm4}
	\end{equation}
	
	Recall then that for all integers $\ell$, we have $\ell^2\equiv 0 \bmod 4$ (if $\ell$ is even) or $\ell^2\equiv 1 \bmod 4$ (if $\ell$ is odd).  From this we have two possibilities that satisfy Equation (\ref{43rm4}): we must have either 1 or 5 of $x_{0},y_{0},z_{0},x_{2},y_{2},z_{2}$ odd in order for the left side of Equation (\ref{43rm4}) to sum to 1 mod 4. 
    
If only one of the terms is odd, then the left side of Equation (\ref{43j}) will be even since the lone odd term must be multiplied by an even term, and therefore cannot equal 1.  Likewise, if there are 5 odd terms, the left side of Equation (\ref{43j}) will be the sum of two odd terms and one even term, which cannot sum to 1.

Since Equations (\ref{43r}) and (\ref{43j}) cannot simultaneously be satisfied, $9 + 2 \bj$ cannot be expressed as the sum of three squares in $Q^{2}_{4m,4n+3}$.
\end{proof}

\subsubsection*{When $a$ is a Sum of 2 Integer Squares}

When $a$ is a sum of integer squares, we can construct an algorithm to express elements of $\qtab$ as the sum of 4 squares.  This gives us 
a general result when combined with the lower bound results of Lemma \ref{43lb}.

\begin{lemma} \label{toad}
	Every element of $\qtab$ is the sum of at most four squares in $\qab$ in the following two cases:
	\begin{itemize}
    	\item $a = n_1^2 + n_2^2$ with $\gcd(n_1,n_2)=1$; or
        \item $a = n_1^2 + n_2^2$ with $\gcd(n_1,n_2)=2$ and $n_1 \equiv 0 \bmod 4$, and $b \not\equiv 0 \bmod 4$. 
    \end{itemize}    
\end{lemma}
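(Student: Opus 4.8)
The plan is to run the same reduction used in the proof of Lemma~\ref{ubsq}, but to exploit the hypothesis $a = n_1^2 + n_2^2$ in order to save one square. Given $\alpha = \alpha_0 + 2\alpha_1\bi + 2\alpha_2\bj + 2\alpha_3\bk \in \qtab$, I would first set $v = 1 + U\bi + \alpha_2\bj + \alpha_3\bk$ with a free parameter $U \in \Z$, exactly as there, so that $v^2$ already matches the $\bj$- and $\bk$-coefficients of $\alpha$. Subtracting gives
\[ \alpha - v^2 = R(U) + 2(\alpha_1 - U)\bi, \qquad R(U) = \alpha_0 - 1 + aU^2 + b\alpha_2^2 + ab\alpha_3^2, \]
which lies in the subring $\Z[\bi] \cong \Z[\sqrt{-a}]$. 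The entire problem thus reduces to writing this element of $\Z[\sqrt{-a}]$ as a sum of three squares in $\qab$, with the integer $U$ still at my disposal.

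The reason $a = n_1^2 + n_2^2$ helps is the identity $(n_1 + \bi)^2 = n_1^2 - a + 2n_1\bi = -n_2^2 + 2n_1\bi$ and its scalings: it produces a nonzero $\bi$-coefficient while contributing only a perfect square to the real part; dually, $a = n_1^2 + n_2^2$ lets me rewrite the indefinite form $x^2 - ae^2$ as $x^2 - (n_1 e)^2 - (n_2 e)^2$. Concretely, I would take one square $w = (\alpha_1 - U) + \bi$ to cancel the remaining $\bi$-coefficient $2(\alpha_1 - U)$, and two further squares, $x$ (real) and $e\bi$ (pure, with $(e\bi)^2 = -ae^2$), to absorb the real part. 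The three conditions then collapse to the single Diophantine equation
\[ x^2 - ae^2 = K(U), \qquad K(U) = (a-1)U^2 + 2\alpha_1 U + \bigl(\alpha_0 - 1 + a + b\alpha_2^2 + ab\alpha_3^2 - \alpha_1^2\bigr), \]
and any solution $(U,x,e)$ yields $\alpha = v^2 + w^2 + x^2 + (e\bi)^2$, a sum of four squares.

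It remains to solve this equation for some integer $U$, and here $a$ being a sum of two squares is decisive: when $\gcd(n_1,n_2)=1$, every odd prime factor of $a = n_1^2+n_2^2$ is $\equiv 1 \bmod 4$ and $a \equiv 1$ or $2 \bmod 4$, so every residue class modulo $a$ is a sum of two squares; hence the congruence $x^2 \equiv K(U) \bmod a$ is solvable, and the quadratic dependence of $K(U)$ on $U$ lets me steer $K(U)$ into a residue class and size range for which $x^2 - ae^2 = K(U)$ has an honest integer solution. Legendre's Three Squares Theorem (Theorem~\ref{leg3}) remains available to clear any residual, by instead expressing the real part as three ordinary squares after using $U$ to boost it to be non-negative and away from $4^m(8\ell+7)$. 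I expect the genuine difficulty to be entirely $2$-adic: guaranteeing solvability at the prime $2$ while simultaneously keeping $K(U)$ out of the excluded form. This is precisely what the two hypotheses isolate: $\gcd(n_1,n_2)=1$ makes the $2$-adic analysis uniform, whereas $\gcd(n_1,n_2)=2$ with $n_1 \equiv 0 \bmod 4$ forces $a \equiv 4 \bmod 16$, where mod-$4$ residues are no longer all sums of two squares, so the auxiliary hypothesis $b \not\equiv 0 \bmod 4$ is needed to prevent the terms $b\alpha_2^2 + ab\alpha_3^2$ in $K(U)$ from trapping it in $4^m(8\ell+7)$. Carrying out this case-by-case $2$-adic bookkeeping, rather than the algebra of the reduction, is the heart of the proof.
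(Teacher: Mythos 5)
Your setup (peel off $v^2 = (1+U\bi+\alpha_2\bj+\alpha_3\bk)^2$ to land in $\Z[\bi]$, then spend $w^2$ to kill the $\bi$-coefficient) is reasonable, but the step that carries all the weight --- solving $x^2 - ae^2 = K(U)$ --- is a genuine gap. That equation asks for representation of $K(U)$ by the \emph{indefinite} binary form $x^2 - ay^2$, and solvability of such Pell-type problems is not controlled by congruence conditions plus a size range: it depends on the class and unit structure of $\Z[\sqrt{a}]$, and everywhere-local solvability does not imply global solvability for a single binary form. Rewriting $-ae^2$ as $-(n_1e)^2-(n_2e)^2$ does not escape this, since the two new variables are locked in the fixed ratio $n_1:n_2$ and you are still representing by the same binary form. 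Your fallback via Legendre's Three Squares Theorem also does not close the gap: after $v^2$ and $w^2$ you have only two squares left, so writing the residual real part as three integer squares gives five squares total; and if you drop $w$ to free up a square, you must take $U=\alpha_1$ to kill the $\bi$-coefficient and lose the freedom to steer the real part away from negativity and from $4^m(8\ell+7)$.

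The idea you are missing is to make the \emph{real} squares and the \emph{pure} square depend jointly on one parameter so that the quadratic terms cancel. The paper takes $z = 1+\alpha_1\bi+\alpha_2\bj+\alpha_3\bk$ (so $\alpha - z^2 \in \Z$) and then represents an arbitrary integer as $x^2+y^2+w^2$ with $x = n_1\ell+r$, $y = n_2\ell+s$ real and $w = \ell\bi+\delta\bj$ pure: since $n_1^2+n_2^2 = a$, the $\ell^2$ terms cancel and $x^2+y^2+w^2 = 2(rn_1+sn_2)\ell + r^2+s^2-b\delta^2$ is \emph{linear} in $\ell$. Bezout gives $r_0n_1+s_0n_2 = \gcd(n_1,n_2)\in\{1,2\}$, and a handful of choices of $(r,s)$ then cover all residue classes modulo $2$, $4$, or $8$ (with $\delta=1$ and the hypothesis $b\not\equiv 0\bmod 4$ used only to reach the class $3\bmod 4$ when $\gcd(n_1,n_2)=2$). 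This converts the representation problem into a finite covering-of-residue-classes check, which is where the hypotheses on $n_1$, $n_2$, and $b$ actually do their work; your proposed $2$-adic bookkeeping on $K(U)$ has no analogous mechanism and, as written, cannot be completed.
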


Note that we allow $n_1 = 0$ only if $n_2 = 1$ or 2; in the latter case we get $a=4$, which will be useful in light of Lemma \ref{43lb}.

\begin{proof}
Let $\alpha = \alpha_{0}+2\alpha_{1}\bi+2\alpha_{2}\bj+2\alpha_{3}\bk$.  If we let $z=1+\alpha_{1}\bi+\alpha_{2}\bj+\alpha_{3}\bk \in \qab$, then $\alpha-z^{2} \in \Z$.  We claim that every integer can be represented as the sum of three squares in $\qab$; we could then represent $\alpha$ as the sum of four squares.

Let $x = n_1 \ell + r$, $y = n_2 \ell + s$, and $w = \ell \bi + \delta \bj$, for some $\ell, r, s, \delta \in \Z$.  We then have
\begin{align} 
	x^2 + y^2 + w^2 & = (n_1 \ell + r)^2  + (n_2 \ell + s)^2 + (\ell \bi + \delta \bj)^2 \notag\\
    & = 2(r n_1 + s n_2)\ell + r^2 + s^2 - b\delta^2.
    \label{toadex}
\end{align}

Our method will be to choose $r$ and $s$ to determine a ``modulus'' ($r n_1 + s n_2$) and residue class ($r^2 + s^2 - b\delta^2$).  Since $\ell$ is independent of $r$ and $s$, we will therefore be able to represent every integer in that residue class.  (We will only use $\delta$ in one particularly troublesome case.)

Recall that by Bezout's Identity there exist $r_0, s_0 \in \Z$ such that $r_0n_1 + s_0n_2= \gcd(n_1,n_2) \in \{1,2\}$; these will inform our choices of $r$ and $s$.  We then have three cases (relabeling if necessary) that we address separately: 
\begin{enumerate}
	\item[(a)] $n_1$ odd, $n_2$ even, and $\gcd(n_1,n_2) = 1$;		
    \item[(b)] $n_1, n_2$ odd, and $\gcd(n_1,n_2) = 1$; and
 	\item[(c)] $n_1/2$ even, $n_2/2$ odd, and $\gcd(n_1,n_2) = 2$.
\end{enumerate}    
    
\underline{Case (a)}: Our modulus here will be 2.  Note that if $r=r_0$, $s=s_0$, and $\delta = 0$, we have from Equation (\ref{toadex})
\begin{equation*} \label{toad1a}
    x^2+y^2+w^2 = 2\ell + r_0^2 + s_0^2.
\end{equation*}
Next, if $r=r_0-n_2$, $s=s_0+n_1$, and $\delta=0$, Equation (\ref{toadex}) yields
\begin{equation*} \label{toad1b}
    x^2+y^2+w^2 = 2\ell + (r_0-n_2)^2 + (s_0+n_1)^2.
\end{equation*}

Recalling that $n_1$ is assumed to be odd and $n_2$ is assumed to be even, we necessarily have that $r_0^2 + s_0^2$ and $(r_0-n_2)^2 + (s_0+n_1)^2$ cover all residue classes mod 2 with the two equations above.  With a  proper choice of $\ell$, we can therefore directly find $x,y,w \in \qab$ such that $\alpha - z^2 = x^2+y^2+w^2$, and so we can write $\alpha$ as a sum of four squares in $\qab$.

\underline{Case (b)}: Our modulus here will be 4.  Since $n_1$ and $n_2$ are here both odd, we may assume that without loss of generality that $r_0$ is odd and $s_0$ is even.

We then use three choices of $r$ and $s$ to represent all possible residue classes mod 4; we let $\delta=0$ for all subcases.  First, let $r=r_0$ and $s=s_0$.  Equation (\ref{toadex}) is then
\begin{equation*} \label{toad2a}
    x^2+y^2+w^2 = 2\ell + r_0^2 + s_0^2
\end{equation*}
which represents all odd integers, since $r_0$ is odd and $s_0$ is even.

If we then let $r=2r_0$ and $s=2s_0$, Equation (\ref{toadex}) then yields  
\begin{equation*} \label{toad2b}
    x^2+y^2+w^2 = 4\ell + 4(r_0^2 + s_0^2).
\end{equation*}
This allows us to represent all multiples of 4.

If, instead, we let $r=2r_0 - n_2$ and $s=2s_0+n_1$, Equation (\ref{toadex}) then yields  
\begin{equation*} \label{toad2c}
    x^2+y^2+w^2 = 4\ell + (2r_0-n_2)^2 + (2s_0+n_1)^2.
\end{equation*}
As $2r_0-n_2$ and $2s_0+n_1$ are necessarily both odd, this allows us to represent all integers that are $2 \bmod 4$. Combined with the above two choices, this covers all residue classes mod 4, and so similarly to Case (a) we are done.

\underline{Case (c)}: Our modulus here will be 8.  We will need four choices of $r$ and $s$, along with letting $\delta =1$ if $\alpha - z^2 \equiv 3 \bmod 4$.  Note that we are assuming $n_2 \equiv 2 \bmod 4$, so we know that $n_2/2$ is odd.  Additionally, we may assume that $s_0$ is odd and $r_0$ is even.

First, let $r=r_0$ and $s=s_0$.  Equation (\ref{toadex}) is then
\begin{equation} \label{toad3a}
    x^2+y^2+w^2 = 4\ell + r_0^2 + s_0^2.
\end{equation}
If we let $r=r_0 - n_2/2$ and $s = s_0 + n_1/2$, Equation (\ref{toadex}) yields
\begin{equation} \label{toad3b}
    x^2+y^2+w^2 = 4\ell + (r_0-n_2/2)^2 + (s_0+n_1/2)^2.
\end{equation}
Since $s_0$ and $n_2/2$ are both odd, while $r_0$ and $n_1/2$ are even, Equation (\ref{toad3a}) represents all integers that are 1 mod 4, while Equation (\ref{toad3b}) represents all integers that are 2 mod 4.

Next, let $r=2r_0$ and $2s=s_0$.  Equation (\ref{toadex}) is then
\begin{equation*} \label{toad3c}
    x^2+y^2+w^2 = 8\ell + 4(r_0^2 + s_0^2).
\end{equation*}
As $r_0$ is even and $s_0$ is odd, this represents all integers that are 4 mod 8.

If we let $r=2r_0 - n_2$ and $s = 2s_0 + n_1$, Equation (\ref{toadex}) yields
\begin{equation*} \label{toad3d}
    x^2+y^2+w^2 = 8\ell + (2r_0-n_2)^2 + (2s_0+n_1)^2.
\end{equation*}
Since $2r_0 \equiv n_1 \equiv 0 \bmod 4$ and $2s_0 \equiv n_2 \equiv 2 \bmod 4$, this represents all integers that are 0 mod 8, and we therefore have all integers that are 0 mod 4.

We still need to represent integers that are 3 mod 4; this is where $\delta$ comes in.  If we let $\delta = 1$, Equation (\ref{toadex}) becomes
\[x^2 + y^2 + w^2 = 2(r n_1 + s n_2)\ell + r^2 + s^2 - b. \]
If $b \not\equiv 0 \bmod 4$ and $\alpha - z^2 \equiv 3 \bmod 4$, this allows us to represent $\alpha - z^2 + b$ via one of the choices of $r$ and $s$ above.  Therefore we can always represent $\alpha$ as the sum of four squares in $\qab$ in Case (c), which concludes the proof.
\end{proof}

If $a= n_1^2 + n_2^2$ with $\gcd(n_1,n_2) = 2$, then necessarily $a \equiv 0 \bmod 4$; we can then combine Lemmas \ref{43lb} and \ref{toad} to get the following Theorem.

\begin{theorem} \label{frogs}
Suppose that $a = n_1^2 + n_2^2$, where $n_1,n_2 \in \N$ are such that $\gcd(n_1,n_2) = 2$, and 
$m \in \N$.  Then $g_{a,4m+3} = 4$.
\end{theorem}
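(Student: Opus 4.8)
The plan is to prove the two inequalities $g_{a,4m+3}(2)\le 4$ and $g_{a,4m+3}(2)\ge 4$ separately, following the template noted at the start of Section \ref{valueg}. The key preliminary observation is that $\gcd(n_1,n_2)=2$ forces $4\mid a$: writing $n_1=2n_1'$ and $n_2=2n_2'$ with $\gcd(n_1',n_2')=1$ gives $a=4(n_1'^2+n_2'^2)$, so $a=4m'$ for some positive integer $m'$. Since $b=4m+3$, this exhibits $\qab$ as a member of the family $Q_{4m',4m+3}$ to which Lemmas \ref{43lb} and \ref{toad} are tailored.

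For the lower bound I would apply Lemma \ref{43lb} verbatim. Because $a=4m'$ and $b=4m+3$, that lemma guarantees an element of $\qtab$ --- concretely $9+2\bj$, which lies in $\qtab$ since its pure coefficients are even --- that cannot be written as a sum of three squares in $\qab$. Hence $g_{a,4m+3}(2)\ge 4$.

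For the upper bound I would invoke Lemma \ref{toad}. Its second bullet applies exactly when $\gcd(n_1,n_2)=2$, $b\not\equiv 0\bmod 4$, and the representation can be taken with $n_1\equiv 0\bmod 4$. The first two conditions hold immediately ($b=4m+3\equiv 3\bmod 4$), so the whole theorem reduces to arranging $n_1\equiv 0\bmod 4$ and then quoting the $\le 4$ conclusion; combined with the lower bound this gives $g_{a,4m+3}(2)=4$.

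The main obstacle is the parity bookkeeping concealed in that last step. Since $\gcd(n_1',n_2')=1$, the pair $(n_1',n_2')$ is never both even, so either exactly one is even --- whence relabeling forces $n_1\equiv 0\bmod 4$, landing us in Case (c) of Lemma \ref{toad} --- or both $n_1',n_2'$ are odd, in which case $n_1\equiv n_2\equiv 2\bmod 4$ and Lemma \ref{toad} as stated does not apply. The smallest instance is $a=8=2^2+2^2$, whose unique two-square representation has both halves odd, so no relabeling rescues it. Closing this gap is where the real effort lies: I would re-run the modulus-$8$ construction of Case (c) for the subcase $n_1/2,n_2/2$ both odd, selecting $r,s$ from a Bezout relation $r_0n_1+s_0n_2=2$ and deploying the auxiliary $\bj$-coefficient $\delta$ to capture the $3\bmod 4$ residues, then verifying that the resulting residues $r^2+s^2-b\delta^2$ still cover every class mod $8$. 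I expect this residue-covering check to be the decisive and most delicate part of a fully general argument.
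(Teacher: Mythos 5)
Your proposal is, in substance, the paper's own proof: the paper disposes of Theorem \ref{frogs} in the single sentence preceding it, observing that $\gcd(n_1,n_2)=2$ forces $4\mid a$ and then citing Lemma \ref{43lb} for the lower bound and Lemma \ref{toad} for the upper bound, exactly as you do. The mismatch you flag is genuine, and it is a gap in the paper, not one you introduced: Lemma \ref{toad} requires $n_1\equiv 0\bmod 4$ in addition to $\gcd(n_1,n_2)=2$, while Theorem \ref{frogs} silently drops that hypothesis. Values such as $a=8=2^2+2^2$ and $a=40=2^2+6^2$, for which every two-square representation with $\gcd$ equal to $2$ has both $n_1/2$ and $n_2/2$ odd, are claimed by the theorem but not covered by the lemma. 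If one reads the theorem with the hypothesis $n_1\equiv 0\bmod 4$ carried over from Lemma \ref{toad} (as the paper evidently intends), your argument is complete and identical to the paper's.

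Be warned, however, that the patch you sketch for the remaining subcase will not close the gap. If $n_1\equiv n_2\equiv 2\bmod 4$, then in Equation (\ref{toadex}) the modulus $2(rn_1+sn_2)$ is always divisible by $4$, and the attainable residues are tightly constrained: $r^2+s^2$ lies in $\{0,1,2,4,5\}\bmod 8$ while $b\delta^2$ lies in $\{0,b,4b\}\bmod 8$, so for $b\equiv 3\bmod 8$ no choice of $r,s,\delta$ yields a residue congruent to $3\bmod 8$, and a short parity check on the admissible moduli shows that no arithmetic progression produced by the construction meets the class $3\bmod 8$ at all. In fact, for $a=8$ and $b\equiv 3\bmod 8$ the full five-variable form $x^2+y^2-8z_1^2-bz_2^2-8bz_3^2$ already fails to represent anything congruent to $3\bmod 8$ even $2$-adically, so no decomposition of $\alpha-z^2$ into two real squares plus the square of a pure quaternion can work; one would need remaining squares with nontrivially interacting real and pure parts (or a different choice of the initial $z$). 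So the both-odd subcase requires a genuinely new idea, not a re-run of Case (c); absent that, the theorem should be stated with the extra hypothesis of Lemma \ref{toad}.
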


Specifically, if $n_1 = 0$ and $n_2=2$, we get that $g_{4,4m+3} = 4$ for all $m \in \N$.

\subsection{$\gab(2) = 5$} 

In this section, we find $a,b \in \N$ such that there exists elements of $\qab$ that require 5 squares, which by Theorem \ref{ubsq} gives us that $\gab(2) = 5$.

\begin{theorem} 
For all $m,n \in \N$, there are elements of $Q_{4m,4n}^2$ that are not the sum of four squares in $Q_{4m,4n}$. Therefore $g_{4m,4n} (2) = 5$ for all $m,n \in \N$.
\end{theorem}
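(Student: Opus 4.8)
The plan is to exhibit, for each $m,n$, a single explicit element of $Q_{4m,4n}^2$ whose expression as four squares is ruled out by a $2$-adic obstruction, and then to invoke Lemma \ref{ubsq} to conclude $g_{4m,4n}(2)=5$. The candidate I would take is $\alpha = \alpha_0 + 2\bi$, with real part $\alpha_0 \equiv 0 \bmod 4$ (so $\alpha_1 = 1$ and $\alpha_2=\alpha_3=0$); the exact value of $\alpha_0$ is fixed at the end. Writing a putative representation $\alpha = \sum_{t=1}^4 x_t^2$ with $x_t = p_t + q_t\bi + r_t\bj + s_t\bk$, I would first record the four coordinate equations exactly as in the proofs of Lemmas \ref{lbsq} and \ref{43lb}: the real equation $\sum_t (p_t^2 - 4m\,q_t^2 - 4n\,r_t^2 - 16mn\,s_t^2) = \alpha_0$, together with $\sum_t p_t q_t = \alpha_1$, $\sum_t p_t r_t = 0$, and $\sum_t p_t s_t = 0$.

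First I would reduce the real equation modulo $4$. Since $a=4m$ and $b=4n$ are divisible by $4$, every term except $\sum p_t^2$ vanishes, leaving $\sum_t p_t^2 \equiv \alpha_0 \equiv 0 \bmod 4$; hence the number of odd $p_t$ is either $0$ or $4$. Because $\alpha_1 = 1$ is odd, the equation $\sum p_t q_t = 1$ cannot hold when all $p_t$ are even, so in fact all four $p_t$ are odd (which in particular rules out using fewer than four nonzero squares). Reducing the $\bi$- and $\bj$-equations modulo $2$ then gives $\sum_t q_t \equiv 1$ and $\sum_t r_t \equiv 0 \bmod 2$; that is, an odd number of the $q_t$, and an even number of the $r_t$, are odd.

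Next I would reduce the real equation modulo $8$. With all $p_t$ odd, $\sum p_t^2 \equiv 4 \bmod 8$. The term $-16mn\sum s_t^2 \equiv 0 \bmod 8$; and, since the number of odd $r_t$ is even, $\sum r_t^2$ is even, so $-4n\sum r_t^2 \equiv 0 \bmod 8$ as well. The one surviving term is $-4m\sum q_t^2$: since the number of odd $q_t$ is odd, $\sum q_t^2$ is odd, and $-4m\cdot(\text{odd}) \equiv 4\bar m \bmod 8$, where $\bar m = m \bmod 2$. Thus every representation forces $\alpha_0 \equiv 4 + 4\bar m \bmod 8$, i.e. $\alpha_0 \equiv 0 \bmod 8$ when $m$ is odd and $\alpha_0 \equiv 4 \bmod 8$ when $m$ is even. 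Choosing $\alpha_0 = 4$ when $m$ is odd and $\alpha_0 = 8$ when $m$ is even (both $\equiv 0 \bmod 4$, so the argument above applies) produces the required contradiction, exhibiting an element of $Q_{4m,4n}^2$ that is not a sum of four squares; with Lemma \ref{ubsq} this gives $g_{4m,4n}(2)=5$.

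The main obstacle, and the reason this is genuinely harder than Lemma \ref{43lb}, is that with $a,b \equiv 0 \bmod 4$ the modulo-$4$ reduction of the real equation retains only $\sum p_t^2$ and so carries no information linking it to the pure-part equations; a modulo-$4$ count therefore cannot obstruct four squares. The key idea is to push to modulo $8$, where the $\bi$- and $\bj$-squares re-enter with coefficients $4\bar m$ and $4\bar n$, and to use the forced parities from the previous step to evaluate their contributions exactly. Two points require care: one must check that forcing all $p_t$ odd automatically excludes representations using fewer than four squares, and one must track the parity of $m$, which is precisely what dictates the choice between $\alpha_0 = 4$ and $\alpha_0 = 8$ (the parity of $n$, by contrast, drops out because $\alpha_2 = 0$).
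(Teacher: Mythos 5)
Your proof is correct, and it uses the same underlying technique as the paper: write out the coordinate equations for a putative four-square representation and derive a $2$-adic contradiction by reducing the real equation modulo $4$ and then modulo $8$, feeding in parity information from the pure-part equations. The differences are in the details. The paper uses the single witness $8+2\bk$ for all $m,n$: the $\bk$-equation forces one real part odd, the real equation mod $8$ then forces all four real parts odd and forces an odd number of odd entries among the $\bi$-coefficients or the $\bj$-coefficients, and the contradiction lands in the equations $\sum p_tq_t=0$ and $\sum p_tr_t=0$. You instead put the odd pure coefficient on $\bi$, which lets the equation $\sum p_tq_t=1$ do the work of forcing all $p_t$ odd and pinning down the parity of $\sum q_t^2$, and your contradiction lands back in the real equation mod $8$; the price is that the residue of $\alpha_0$ forced by the representation depends on the parity of $m$, so you need two witnesses ($4+2\bi$ for $m$ odd, $8+2\bi$ for $m$ even) where the paper gets by with one. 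Both arguments are complete; your handling of representations by fewer than four squares (padding with $0^2$ makes some $p_t$ even, contradicting the forced parity) is also sound, and the appeal to Lemma \ref{ubsq} for the upper bound matches the paper.
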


\begin{proof}
	Suppose that there exist $w,x,y,z \in Q_{4m,4n}$ such that  $w^2 + x^2 + y^2 + z^2 = 8+2\bk$.  Letting

	\begin{align*}
		&w=w_0+w_1\bi+w_2\bj+w_3\bk\\
		&x=x_0+x_1\bi+x_2\bj+x_3\bk\\
		&y=y_0+y_1\bi+y_2\bj+y_3\bk\\
		&z=z_0+z_1\bi+z_2\bj+z_3\bk,
	\end{align*}

the resulting equations for the real, $\bi$, $\bj$, and $\bk$ coefficients are, respectively:
	\begin{align}
		\begin{split}
			w_{0}^{2}+x_{0}^{2}+y_{0}^{2}+z_{0}^{2}-4m(w_{1}^{2}+x_{1}^{2}+y_{1}^{2}+z_{1}^{2}) \hspace{1in}&\\	-4n(w_{2}^{2}+x_{2}^{2}+y_{2}^{2}+z_{2}^{2})-16mn(w_{3}^{2}+x_{3}^{2}+y_{3}^{2}+z_{3}^{2})&=8	\label{44r}
		\end{split}\\
		w_{0}w_{1}+x_{0}x_{1}+y_{0}y_{1}+z_{0}z_{1}&=0	\label{44i}\\
		w_{0}w_{2}+x_{0}x_{2}+y_{0}y_{1}+z_{0}z_{1}&=0	\label{44j}\\
		w_{0}w_{3}+x_{0}x_{3}+y_{0}y_{3}+z_{0}z_{3}&=1.	\label{44k}
	\end{align}
	
We start by examining Equation (\ref{44k}) mod 2, and note that at least one of $w_0,x_0,y_0,z_0$ must be odd, as otherwise the sum of the terms would be even. Since at least one of these terms must be odd, we assume without loss of generality that $w_{0}\equiv1\bmod2$. With that in mind, Equation (\ref{44r}) mod 8:
	\begin{equation}
		1+x_{0}^{2}+y_{0}^{2}+z_{0}^{2}-4m(w_{1}^{2}+x_{1}^{2}+y_{1}^{2}+z_{1}^{2})-4n(w_{2}^{2}+x_{2}^{2}+y_{2}^{2}+z_{2}^{2})\equiv0\bmod8.	\label{44rm8}
	\end{equation}

	Recall then that for all odd $\ell$, we have $\ell^2\equiv1\bmod8$, and for all even $\ell$, $\ell^2\equiv0\text{ or }4\bmod8$. Since the left side of Equation (\ref{44rm8}) is 1 added to three squares followed by multiples of 4; in order for it to sum to 0 mod 8, $x_{0}^{2},y_{0}^{2},z_{0}^{2}$ must all be $1\bmod8$. So $w_{0}^{2},x_{0}^{2},y_{0}^{2},z_{0}^{2}$ are odd.
    
Then $w_{0}^{2}+x_{0}^{2}+y_{0}^{2}+z_{0}^{2}\equiv4\bmod8$, so an odd number of $w_{1}^{2},x_{1}^{2},y_{1}^{2},z_{1}^{2}$ or $w_{2}^{2},x_{2}^{2},y_{2}^{2},z_{2}^{2}$ must be odd to contribute an additional 4 mod 8. But this forces an odd number of odd terms on the left side of one of Equations (\ref{44i}) and (\ref{44j}), which contradicts their even sums.
	
	Since the equations required for $8+2\bk$ to be a sum of four squares in $Q_{4m,4n}$ cannot hold, $8+2\bk$ cannot be expressed as a sum of four squares in $Q_{4m,4n}$.
 \end{proof}

\section{Other individual cases}

We were able to find $\gab(2)$ in several other cases for specific values of $a$ and $b$.  We include these here for completeness but also to demonstrate the methods used, which vary significantly from those used in Section \ref{valueg}.

\begin{theorem}\label{2223}
$g_{2,2}(2) = g_{2,3}(2) = 3$.
\end{theorem}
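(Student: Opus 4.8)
Since Theorem \ref{sqbounds} already gives $g_{a,b}(2) \geq 3$ for all $a,b$ (and indeed Lemma \ref{lbsq} shows $2+2\bi$ needs at least three squares when $a \equiv 2 \bmod 4$), the entire content of the statement is the upper bound: every element of $Q_{2,2}^2$ and of $Q_{2,3}^2$ is a sum of at most three squares. My plan is to mimic the reduction used for $Q_{1,b}$, exploiting that in \emph{both} rings $\bi^2 = -2$, so the subring $\Z[\bi] = \{x + y\bi\}$ is a copy of $\Z[\sqrt{-2}]$. Given $\alpha = \alpha_0 + 2\alpha_1\bi + 2\alpha_2\bj + 2\alpha_3\bk \in Q_{2,b}^2$, I would set $z = 1 + U\bi + \alpha_2\bj + \alpha_3\bk$ with $U \in \Z$ a parameter to be chosen, so that (using the squaring formula (\ref{al2}))
\[
\alpha - z^2 = \big(\alpha_0 - 1 + 2U^2 + b(\alpha_2^2 + 2\alpha_3^2)\big) + 2(\alpha_1 - U)\bi,
\]
which lies in $\Z[\sqrt{-2}]$ and has even $\bi$-coefficient. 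Thus both cases reduce to a \emph{single} problem: representing an element $c + 2\gamma\,\bi$ of $\Z[\sqrt{-2}]$ as a sum of two squares, where $\gamma = \alpha_1 - U$ and $c$ is the displayed real part; the only difference between $b=2$ and $b=3$ is which real parts are reachable.

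For the two-squares step I would use the explicit ansatz $x = 1 + \gamma\,\bi$ and $y = r$ with $r \in \Z$. Then $x^2 + y^2 = (1 - 2\gamma^2 + r^2) + 2\gamma\,\bi$, so the $\bi$-coefficient matches automatically and the real equation becomes, after substituting $U = \alpha_1 - \gamma$,
\[
r^2 - (2\gamma - \alpha_1)^2 = \alpha_0 - 2 + \alpha_1^2 + b(\alpha_2^2 + 2\alpha_3^2) =: M.
\]
The whole question therefore collapses to writing the fixed integer $M$ as a difference of two squares $r^2 - w^2$ subject to the parity constraint $w \equiv \alpha_1 \bmod 2$ (forced because $w = 2\gamma - \alpha_1$). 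A difference of squares represents every integer not congruent to $2 \bmod 4$, and the parity-restricted version covers most residues mod $8$, so this single construction already disposes of the generic case: whenever $M$ falls in the reachable residue classes, $\alpha = z^2 + x^2 + y^2$ is an explicit three-square representation.

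The main obstacle is the residue/parity bookkeeping for the leftover classes of $M$ — those where the constraint on the parity of $w$ clashes with the difference-of-squares obstruction. To handle these I would introduce the extra degree of freedom used in Lemma \ref{toad}, namely a $\bj$-component in $y$ (replacing $y = r$ by $y = r + \delta\bj$), which subtracts $b\delta^2$ from the real part and so shifts $M$ by a multiple of $b$. This is exactly where $b \bmod 4$ enters: for $b \equiv 2$ and $b \equiv 3 \bmod 4$ the shift moves every otherwise-unreachable residue of $M$ into a good class, whereas for other $b$ it need not, which is presumably why the theorem is stated for these specific rings rather than all $Q_{2,b}$. Alternatively, I would consult the characterization of sums of two squares in imaginary quadratic rings from \cite{nivengauss} and \cite{eljoseph} to confirm the reachable set directly. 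I expect the genuine work to be this finite case analysis on the parities of $\alpha_0,\alpha_1,\alpha_2,\alpha_3$ together with $b \bmod 4$, rather than any single conceptual difficulty.
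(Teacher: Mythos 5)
The gap is in the two-squares step, which is where all the content of this theorem lives. Your ansatz $x=1+\gamma\bi$, $y=r$ reduces everything to writing $M=r^2-w^2$ with the parity of $w=2\gamma-\alpha_1$ forced to agree with that of $\alpha_1$, and this parity-constrained difference of squares misses far more than a few stray residues: when $\alpha_1$ is odd (so $w$ must be odd), $r^2-w^2$ takes only the values $0,3,7 \bmod 8$, so five of the eight classes are unreachable. Concretely, $\alpha=3+2\bi\in Q_{2,2}^2$ gives $M=2$, which is not a difference of two squares at all, and shifting $M$ by $b\delta^2\equiv 0$ or $2 \bmod 8$ never lands in $\{0,3,7\}\bmod 8$, so the classes $2$ and $4 \bmod 8$ stay out of reach. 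Worse, the patch itself is ill-formed: with $y=r+\delta\bj$ and $r\delta\neq 0$ you get $y^2=r^2-b\delta^2+2r\delta\bj$, and that $\bj$-component has nowhere to go because $z$ has already absorbed all of $2\alpha_2\bj$. The $\delta$ device in Lemma \ref{toad} works only because the element carrying $\delta\bj$ there is \emph{pure}, so its square is an integer. Finally, your fallback of invoking \cite{nivengauss} for sums of two squares in $\Z[\sqrt{-2}]$ runs into exactly the obstruction the paper flags after the $Q_{1,b}$ theorem: the clean two-square criterion is special to the Gaussian integers and does not transfer to other imaginary quadratic rings.

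The paper's proof avoids this by not introducing a free parameter $U$ at all: it takes $x=1+\alpha_1\bi+\alpha_2\bj+\alpha_3\bk$, so $\alpha-x^2$ is the integer $A=\alpha_0-1+a\alpha_1^2+b\alpha_2^2+ab\alpha_3^2$, and then seeks $A=y_0^2-(az_1^2+bz_2^2+abz_3^2)$ with $y_0\in\Z$ and $z$ pure. The negative side is thus a full ternary form ($2x^2+2y^2+4z^2$ or $2x^2+3y^2+6z^2$) rather than your rank-one $w^2$, and the decisive ingredient is the \emph{regularity} of these two forms (Lemma \ref{ternreg}, from Dickson's table), which converts ``which integers does the form represent'' into a congruence condition and reduces the whole theorem to a finite check mod $32$ and mod $24$. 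Your proposal has no substitute for that input; if you repair the ansatz to keep the quadratic flexibility in a pure element, you will still need to know exactly which integers $az_1^2+bz_2^2+abz_3^2$ represents, and that is precisely the regularity statement.
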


These proofs rely of the theory of quadratic forms -- specifically, representations of integers via ternary diagonal quadratic forms.  A ternary diagonal quadratic form is a function $f(x,y,z) = rx^2 + sy^2 + tz^2$; for our purposes, we have $r,s,t \in \N$.  We say a ternary diagonal quadratic form {\em represents} $n \in \N$ if there exists an integer solution to $f(x,y,z) = n$.  Lastly, we say that a ternary diagonal quadratic form is {\em regular} if the only positive integers it does not represent coincide with certain arithmetic progressions. The most common example of this is Legendre's Three-Squares Theorem: that every positive integer not of the form $4^m (8\ell + 7)$ can be represented in the form $x^2 +y^2 + z^2$ with $x,y,z \in \Z$.  For more information on representation of integers via quadratic forms, see \cite{JonesPall} or (more recently) \cite{Hanke}.

Noting that 
\[(x\bi + y\bj + z\bk)^2 = -(ax^2 + by^2 + abz^2),\]
for our Theorem, we will examine the expressions $2x^2 + 2y^2 + 4z^2$ and $2x^2 + 3y^2 + 6z^2$.  Dickson has a complete list of regular diagonal ternary quadratic forms, from whence we get the following Lemma.

\begin{lemma} (Table 5 of \cite{Dickson}) \label{ternreg}
\begin{enumerate}
	\item Let $f_{2,2} (x,y,z) = 2x^2 + 2y^2 + 4z^2$.  Then $f_{2,2}$ represents all even integers not of the form $2 \cdot 4^n (16 \ell + 14)$.
    \item Let $f_{2,3} (x,y,z) = 2x^2 + 3y^2 + 6z^2$.  Then $f_{2,3}$ represents all positive integers not of the form $4^n (8 \ell + 7)$ or $3m+1$.
\end{enumerate}
\end{lemma}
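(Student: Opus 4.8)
The plan is to characterize, for each form, the positive integers it represents as exactly those satisfying the relevant local (congruence) conditions, and to show the excluded arithmetic progressions are precisely the local obstructions. For the first form this can be carried out entirely elementarily. Writing $f_{2,2}(x,y,z) = 2(x^2+y^2+2z^2)$, every value is even, and the identity
\[ 2(x^2+y^2+2z^2) = (x+y)^2 + (x-y)^2 + (2z)^2 \]
shows every value is also a sum of three integer squares. Conversely I would prove the sharper statement that $M = x^2+y^2+2z^2$ is solvable if and only if $2M$ is a sum of three integer squares: the forward direction is the identity above, and for the reverse, given $2M = a^2+b^2+c^2$ one notes an even number of $a,b,c$ are odd and checks that the two-odd case gives $M = \bigl(\tfrac{a+b}{2}\bigr)^2 + \bigl(\tfrac{a-b}{2}\bigr)^2 + 2\bigl(\tfrac{c}{2}\bigr)^2$, while the all-even case (writing $a=2a'$, etc.) gives $M = (b'+c')^2 + (b'-c')^2 + 2a'^2$. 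Hence $f_{2,2}$ represents $N$ exactly when $N$ is even and $N$ is a sum of three squares; by Legendre's Three Squares Theorem (Theorem \ref{leg3}) the even integers that fail are precisely those of the form $4^{n+1}(8\ell+7) = 2\cdot 4^n(16\ell+14)$, which is the claimed exclusion.

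For $f_{2,3}(x,y,z) = 2x^2+3y^2+6z^2$ I would first pin down the two families of local obstructions. Reducing modulo $3$ gives $f_{2,3} \equiv 2x^2 \bmod 3$, which takes only the values $0$ and $2$, so no integer $\equiv 1 \bmod 3$ is represented. A $2$-adic analysis—computing the residues of $f_{2,3}$ modulo $8$, which turn out to be exactly $\{0,1,2,3,4,5,6\}$, and tracking how representability behaves under scaling by $4$—shows that the integers $f_{2,3}$ fails to represent $2$-adically are exactly those of the form $4^n(8\ell+7)$. Since the determinant is $36$ and $\mathrm{diag}(2,3,6)$ is $p$-adically universal at every prime $p \nmid 6$, there are no further local obstructions; these two computations therefore determine precisely which integers are represented locally everywhere, giving necessity of both stated exclusions.

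The crux is the sufficiency direction for $f_{2,3}$: that every positive $N$ with $N \not\equiv 1 \bmod 3$ and $N \neq 4^n(8\ell+7)$ is actually represented. Unlike $f_{2,2}$, this form is not a rescaling of a sum-of-three-squares form, so I expect the main obstacle to be showing that local representability forces global representability, i.e. that $f_{2,3}$ is regular. The cleanest route is to prove that the genus of $\mathrm{diag}(2,3,6)$ consists of a single equivalence class, which I would verify either through the Minkowski–Siegel mass formula or by enumerating the reduced positive ternary forms of determinant $36$ and checking that only $\mathrm{diag}(2,3,6)$ lies in this genus (competitors such as $\mathrm{diag}(1,4,9)$ or $\mathrm{diag}(3,3,4)$ are ruled out, since they either represent $1$ or attain the residue $1 \bmod 3$ and hence sit in different genera). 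Once the genus is single-class, representation by the genus coincides with representation by the form, and the local analysis above completes the characterization; equivalently, one may verify directly that $\mathrm{diag}(2,3,6)$ appears in Dickson's classification of regular ternary forms and invoke regularity.
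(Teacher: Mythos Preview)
The paper does not prove this lemma at all: it simply quotes the two relevant entries from Dickson's table of regular diagonal ternary forms and uses them as a black box. Your proposal therefore goes well beyond what the paper offers.

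Your treatment of part~(1) is complete and correct, and genuinely more than the paper's citation. The identity $2(x^2+y^2+2z^2)=(x+y)^2+(x-y)^2+(2z)^2$, together with the case split you give for the converse, establishes that $f_{2,2}$ represents $N$ if and only if $N$ is an even sum of three squares; the translation of the even non-sums-of-three-squares into the form $2\cdot 4^n(16\ell+14)$ is exactly right. This is a self-contained elementary argument that avoids Dickson entirely.

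For part~(2) your local analysis is correct --- the mod~$3$ computation and the mod~$8$ residue set $\{0,1,2,3,4,5,6\}$ check out --- but the sufficiency step, that the genus of $\mathrm{diag}(2,3,6)$ has a single class, is precisely the content encoded in Dickson's table. Your proposed verification via the mass formula or an enumeration of reduced forms of determinant $36$ is the standard route and would work, but it is comparable in depth to (indeed, it is how one produces) Dickson's entry; you acknowledge as much by offering the direct appeal to Dickson's list as an alternative at the end. So for part~(2) your approach is essentially a reconstruction of the cited result for this particular form rather than an independent shortcut of the kind you found for part~(1).
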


\begin{proof}[Proof of Theorem \ref{2223}]
Let $\al = \alpha_0 + 2\alpha_1 \bi + 2\alpha_2 \bj + 2\alpha_3 \bk \in \qab^2$.  Then, letting $x = 1 + \alpha_1 \bi + \alpha_2 \bj + \alpha_3 \bk$, we have
\begin{equation} \label{2223part}
	\al - x^2 = \alpha_0 - 1 + a\alpha_1^2 + b\alpha_2^2 + ab\alpha_3^2 :=A \in \Z.
\end{equation}
It then suffices to find elements $y,z \in \qab$ with $y = y_0 \in \Z$ and $z= z_1 \bi + z_2 \bj + z_3 \bk$ such that
\begin{equation} \label{2223A}
	A = y^2 + z^2 = y_0^2 - az_1^2 - bz_2^2 - abz_3^2,
\end{equation}
as we would then have $\al = x^2 + y^2 + z^2$.

\underline{Case 1: ($a=b=2$)} In light of Lemma \ref{ternreg} and the regularity of the associated quadratic form, we know that if we can represent the residue class of $A$ mod 32, then we can find $y_0, z_0, z_1, z_2$ that satisfy Equation (\ref{2223A}).

We let $S_{a,b;m}$ be the set of residue classes mod $m$ that are completely represented by $f_{a,b}(z_0,z_1,z_2) = az_0^2 + bz_1^2 + abz_2^2$.  For example, $2 \in S_{2,2;32}$ since $f_{2,2}(1,0,0) = 2$, $2 \not\equiv  2 \cdot 4^n (16 \ell + 14) \bmod 32$ for any $n, \ell \in \N$, and by Lemma \ref{ternreg} $f_{2,2}$ represents all even integers not of the form $2 \cdot 4^n (16 \ell + 14)$.   But $16 \not\in S_{2,2;32}$ since $16 \equiv 2 \cdot 4^1(16 \ell + 14) \bmod 32$.

When $a=b=2$ and $m = 32$, we have
\[S_{2,2;32} = \{2,4,6,8,10,12,14,18,20,22,24,26,30\};\] 
our goal then is to show that for any $A \in \Z$, we can find $y_0 \in \Z$ and $s \in S_{2,2;32}$ such that $A \equiv  y_0^2 - s \bmod 32$.  By Lemma $\ref{ternreg}$, there would then exist $z = z_1\bi+ z_2\bj + z_3\bk \in Q_{2,2}$ such that $-s \equiv z^2 \bmod 32$ and $A = y_0^2 +z^2$.

We can then break this search for $y_0$ and $s$ into cases:
\begin{itemize}
	\item if $A \not\equiv 0,1,4,5,16$, or $17 \bmod 32$, then $A$ is congruent to either $-s$ or $1-s$ for some $s \in S_{2,2;32}$;
	\item if $A \equiv 0,16 \bmod 32$, then $A \equiv 4-s \bmod 32$ for $s=4,20 \in S_{2,2;32}$;
	\item if  $A \equiv 1,5,17 \bmod 32$, then $A \equiv 9-s \bmod 32$ for $s=8,4,24 \in S_{2,2;32}$; and 
	\item if $A \equiv 4 \bmod 32$, then $A \equiv 16-s \bmod 32$ for $s=12 \in S_{2,2;32}$.
\end{itemize}
Therefore we can represent $A$ as a sum of two squares from $Q_{2,2}$, and so we can always express $\alpha$ as a sum of three squares from $Q_{2,2}$.

\underline{Case 2: ($a=2$, $b=3$)}  We again use the set $S_{a,b;m}$, letting $m=24$; this yields
\[S_{2,3;24} = \{2, 3, 5, 6, 9, 11, 14, 17, 18, 21\}.\]  

Similarly to Case 1, we search for $y_0 \in \Z$ and $s \in S_{2,3;24}$ such that $A \equiv  y_0^2 - s \bmod 24$. 
\begin{itemize}
	\item if $A \not\equiv 0,1,2,5,9,12$, or $17 \bmod 24$, then $A$ is congruent to either $-s$ or $1-s$ for some $s \in S_{2,3;24}$;
	\item if $A \equiv 1,2,17 \bmod 24$, then $A \equiv 4-s \bmod 24$ for $s=3,2,11 \in S_{2,3;24}$;
	\item if  $A \equiv 0,12 \bmod 24$, then $A \equiv 9-s \bmod 24$ for $s=9,21 \in S_{2,3;24}$; 
	\item if $A \equiv 5 \bmod 24$, then $A \equiv 16-s \bmod 24$ for $s=11 \in S_{2,3;24}$; and
	\item if $A \equiv 9 \bmod 24$, then $A \equiv 36-s \bmod 24$ for $s=3 \in S_{2,3;24}$.
\end{itemize}
Therefore as above we can always express $\alpha$ as a sum of three squares from $Q_{2,3}$.  Given the lower bound for $\gab(2)$ given by Lemma \ref{lbsq}, we therefore have $\gab(2) = 3$ in both cases.
\end{proof}

The proof of Theorem \ref{2223} relies entirely on the regularity of the associated ternary quadratic forms given in Lemma \ref{ternreg}.  There are, unfortunately, only finitely many regular diagonal ternary quadratic forms (Table 5 of \cite{Dickson} is a complete list), so this exact method has limited general use.  Nonetheless, there does seem to be a close relationship between these Quaternion rings and ternary quadratic forms, and one might be able to relax the regularity condition slightly and be able to represent ``enough'' integers to use a similar method as in Theorem \ref{2223}.

\section{Open Questions}

There are many questions left to explore here.  It seems like it should be possible to find $\gab(2)$ for all $a$ and $b$ positive; at the very least, we'd like to know the proportion of such Quaternion rings that have each of the possible values of $\gab(2)$.    We have also been using as our analog of the integers the Lipschitz Quaternions; the Hurwitz Quaternions would be an equally good choice, especially since we would get unique factorization.  Lastly, we have been focusing on the cases when $\bi^2$ and $\bj^2$ are negative; one could easily investigate the cases when one or both are positive.

\bibliographystyle{plainnat}

\end{document}